%
%
%
%

\documentclass[runningheads,a4paper]{llncs}

\usepackage{amssymb}
\setcounter{tocdepth}{3}
\usepackage{graphicx}

\usepackage{url}
\urldef{\mailsa}\path|{alfred.hofmann, ursula.barth, ingrid.haas, frank.holzwarth,|
\urldef{\mailsb}\path|anna.kramer, leonie.kunz, christine.reiss, nicole.sator,|
\urldef{\mailsc}\path|erika.siebert-cole, peter.strasser, lncs}@springer.com|

\usepackage{amsfonts}

\newcommand{\CC}{{\mathbb{C}}}
\newcommand{\NN}{{\mathbb{N}}}
\newcommand{\RR}{{\mathbb{R}}}
\newcommand{\TT}{{\mathbb{T}}}
\newcommand{\ZZ}{{\mathbb{Z}}}
\newcommand{\cD}{{\mathcal{D}}}
\newcommand{\cS}{{\mathcal{S}}}
\newcommand{\cP}{{\mathcal{P}}}
\newcommand{\cQ}{{\mathcal{Q}}}

\newcommand{\Span}{{\mathop{\rm span\,}}}
\newcommand{\diag}{{\mathop{\rm diag}}}
\newcommand{\rank}{{\mathop{\rm rank\,}}}

\newcommand{\Stirl}[2]{{\left\{ #1 \atop #2 \right\}}}
\newcommand{\Stirli}[2]{{\left[ #1 \atop #2 \right]}}

\begin{document}

\mainmatter  

\title{Reconstructing sparse exponential polynomials from samples:
  difference operators,
  Stirling numbers and Hermite interpolation}

\titlerunning{Reconstruction of sparse exponential polynomials}

%
%
\author{Tomas Sauer}
\authorrunning{Tomas Sauer}

\institute{Lehrstuhl f\"ur Mathematik mit Schwerpunkt Digitale
  Bildverarbeitung \& FORWISS, University of Passau, D--94032 Passau
  \\
  \url{Tomas.Sauer@uni-passau.de}
  \url{http://www.fim.uni-passau.de/digitale-bildverarbeitung}}

%
%

\toctitle{Reconstructing sparse exponential polynomials from samples}
\tocauthor{Tomas Sauer}
\maketitle

\begin{abstract}
  Prony's method, in its various concrete algorithmic realizations, is
  concerned with the reconstruction of a sparse exponential sum from
  integer samples. In several variables, the reconstruction is based
  on finding the variety for a zero dimensional radical ideal. If one
  replaces the coefficients in the representation by polynomials,
  i.e., tries to recover sparse exponential polynomials, the zeros
  associated to the ideal have multiplicities attached to them. The
  precise relationship
  between the coefficients in the exponential polynomial and the
  multiplicity spaces are pointed out in this paper.
\end{abstract}

\section{Introduction}
In this paper we consider an extension of what is known as
\emph{Prony's problem}, namely the reconstruction of a function
\begin{equation}
  \label{eq:exppolyDef}
  f(x) = \sum_{\omega \in \Omega} f_\omega (x) \, e^{\omega^T x},
  \qquad 0 \neq f_\omega \in \Pi, \qquad \omega \in \Omega \subset
  \left( \RR + i \TT \right)^s,
\end{equation}
from multiinteger samples, i.e., from samples $f(\Lambda)$ of $f$ on a
subgrid $\Lambda$
of $\ZZ^s$. Here, the function $f$ in (\ref{eq:exppolyDef}) is assumed to be
a sparse
\emph{exponential polynomial} and the original version of Prony's
problem, stated in one variable in \cite{prony95:_essai}, is the case where
all $f_\omega$ are \emph{constants}. Here ``sparsity''
refers to the fact that the cardinality of $\Omega$ is 
small and that the frequencies are either too unstructured or too
irregularly spread to be analyzed, for example, by means of Fourier
transforms.

Exponential polynomials appear quite frequently in various fields of
mathematics, for example they are known to be exactly the homogeneous
solutions of partial differential equations 
\cite{groebner37:_ueber_macaul_system_bedeut_theor_differ_koeff,groebner39:_ueber_eigen_integ_differ_koeff} 
or partial difference equations \cite{Sauer16:_kernels} with constant
coefficients. 

I learned about the generalized problem (\ref{eq:exppolyDef})
from a very interesting talk of Bernard Mourrain at the 2016 MAIA
conference, September 2016. Mourrain
\cite{Mourrain16P} studies extended and generalized Prony problems,
especially of the form (\ref{eq:exppolyDef}), but
also for $\log$-polynomials, by means of sophisticated
algebraic techniques like Gorenstein rings and truncated Hankel
operators. Much of it is based on the classical duality between series and
polynomials that was used in the definition of \emph{least interpolation}
\cite{deBoorRon92a}. He also gives a recovery algorithm based on
finding separating lines, a property that has to defined \emph{a
  posteriori} and that can lead to severe numerical problems.

This paper here approaches the problem in a different, more direct and
elementary way, following the
concepts proposed in
\cite{Sauer15S,Sauer2016A}, namely by using as a main tool the
factorization of a certain Hankel matrix in
terms of Vandermonde matrices; this factorization, stated in
Theorem~\ref{T:HankelFact} has the advantage to give a handy
criterion for sampling sets and was a useful tool for understanding
Prony's problem in several variables, cf. \cite{Sauer2016A}. Moreover,
the approach uses connections to the
description of finite dimensional kernels of multivariate convolutions
or, equivalently, the homogeneous solutions of systems of partial
difference operators.

Before we explore Prony's problem in detail, we show in
Section~\ref{sec:Kernels} that it can also
be formulated in terms of kernels of convolution operators or,
equivalently, in terms of homogeneous solutions of partial difference
equations. From that perspective it is not too surprising that many of
the tools used here are very similar to the ones from \cite{Sauer16:_kernels}.
The relationship that connects finite
differences, the Taylor expansion and the Newton form of interpolation
on the multiinteger grid, can be conveniently expressed in terms of
multivariate Stirling numbers of the second kind and will be
established in Section~\ref{sec:Stirling}. In
Section~\ref{sec:IdealHermite}, this background will be applied to define the
crucial ``Prony ideal'' by means of a generalized Hermite interpolation
problem that yields an \emph{ideal projector}. The aforementioned
factorization, stated and proved in Section~\ref{sec:Prony} then
allows us to directly extend the algorithms from
\cite{Sauer15S,Sauer2016A} which 
generate ideal bases and multiplication tables to the
generalized problem without further work. How the eigenvalues of the
multiplication tables relate to the common zeros of the ideal in the
presence of multiplicities is finally pointed out and discussed in
Section~\ref{sec:MultTab}.

The notation used in this paper is as follows. By $\Pi = \CC
[x_1,\dots,x_s]$ we denote the ring of polynomials with complex
coefficients. For $A \subset \NN_0^s$ we denote by $\Pi_A = \Span \{ 
(\cdot)^\alpha : \alpha \in A \} \subset \Pi$ the vector space spanned
by the monomials with exponents in $A$, using the fairly common
notation $(\cdot)^\alpha$ for the monomial (function) $m \in \Pi$,
defined as $m(x) = x^\alpha$.
The set of all multiindices
$\alpha \in \NN_0^s$ of \emph{length} $|\alpha| = \alpha_1 + \cdots +
\alpha_s$ is written as $\Gamma_n := \{ \alpha \in
\NN_0^s : |\alpha| \le n \}$ and defines $\Pi_n = \Pi_{\Gamma_n}$,
the vector space of polynomials of total degree at most $n$.

\section{Kernels of difference operators}
\label{sec:Kernels}
There is a different point of view for Prony's problem
(\ref{eq:exppolyDef}), namely in terms of \emph{difference operators}
and their kernels. Recall that a difference equation can be most
easily written as
\begin{equation}
  \label{eq:DiffEq}
  q(\tau) u = v, \qquad u,v : \ZZ^s \to \CC, \quad q \in \Pi,
\end{equation}
where $\tau$ stands for the \emph{shift operator} defined as
$\tau_j u := u (\cdot + \epsilon_j)$, $j=1,\dots,s$, and $\tau^\alpha
:= \tau_1^{\alpha_1} \cdots \tau_s^{\alpha_s}$, $\alpha \in \NN_0^s$.
Then the
difference equation (\ref{eq:DiffEq}) takes the explicit form
\begin{equation}
  \label{eq:ConvolDef}
  v = \left( \sum_{\alpha \in \NN_0^s} q_\alpha \tau^\alpha \right) f
  = \sum_{\alpha \in \NN_0^s} q_\alpha \tau^\alpha u = \sum_{\alpha \in
    \NN_0^s} q_\alpha \, u (\cdot + \alpha) = a * u  
\end{equation}
where $a$ is a finitely supported sequence of filter coefficients with
nonzero coefficients $a (-\alpha) = q_\alpha$. This is the well known
fact that any difference equation is equivalent to an FIR filter or a
convolution or correlation with a finite sequence.

Of particular interest are \emph{kernels} of the convolution operators
or, equivalently, homogeneous solutions for the difference equation
(\ref{eq:DiffEq}), or, more generally of a finite \emph{system} of difference
equations
\begin{equation}
  \label{eq:DiffEqSys}
  q (\tau) u = 0, \qquad q \in Q \subset \Pi, \, \# Q < \infty.
\end{equation}
Indeed, it is easily seen that (\ref{eq:DiffEqSys}) depends on the
space $\cQ = \Span Q$ and not on the individual basis. But even more
is true. Since, for any polynomials $g_q \in \Pi$, $q \in Q$, we also have
$$
\sum_{q \in Q} (g_q \, q) (\tau) u = \sum_{q \in Q} g_q (\tau) (
q(\tau) u ) = \sum_{q \in Q} g_q (\tau) \, 0 = 0,
$$
the space of homogeneous solutions of (\ref{eq:DiffEqSys}) depends on
the \emph{ideal} $\langle Q \rangle$ generated by $Q$ and any (ideal)
basis of this ideal defines a system of difference equations with the
same solutions.

The kernel space
$$
\ker Q(\tau) := \{ u : q(\tau) u = 0, \, q \in Q \},
$$
on the other hand, also has an obvious structure:
$$
0 = p(\tau) 0 = p(\tau) q(\tau) u = q(\tau) p(\tau) u, \qquad p \in \Pi,
$$
tells us that $u \in \ker Q(\tau)$ if and only if $p(\tau) \in \ker
Q(\tau)$ for any $p \in \Pi$ so that the homogeneous spaces are closed
under translation. This already indicates that relationships between
translation invariant polynomial spaces and ideals will play a crucial
role.

In one variable, the homogeneous solutions of difference equations are
known to be exactly the exponential polynomial sequence, i.e., the
sequences obtained by sampling exponential polynomials of the form
(\ref{eq:exppolyDef}), see \cite{jordan65:_calcul}. In several
variables, the situation is slightly more intricate and studied in
\cite{Sauer16:_kernels} where a characterization is given for the case
that $\ker Q (\tau)$ is \emph{finite dimensional}. Indeed, the kernel
spaces are of the form
$$
\bigoplus_{\omega \in \Omega} \cP_\omega \, e^{\omega^T \cdot},
$$
where $\Omega \subset \CC^s$ is a finite set of frequencies and
$\cP_\omega \subset \Pi$ is a finite dimensional translation invariant
subspace of polynomials.

Therefore, we can reformulate the problem of reconstructing a function
of the form (\ref{eq:exppolyDef}) from integer samples, i.e., from
\begin{equation}
  \label{eq:fsequence}
  \alpha \mapsto f(\alpha) = \sum_{\omega \in \Omega} f_\omega (\alpha) \,
  e^{\omega^T \alpha}, \qquad \alpha \in \ZZ^s,  
\end{equation}
as the problem of finding, for this exponential polynomial sequence, a
system $Q$ of partial difference equations such that $Q(\tau) f =
0$. This finding of homogenizing equations is clearly the dual of
finding homogeneous solutions of a given system. In fact, we will also
study the question of how many elements of the sequence
(\ref{eq:fsequence}) we have to know in order to generate the ideal
and how we can finally recover $f$ again from the dual equations. In
this respect, we can reformulate the construction from the subsequent
sections in the following form.

\begin{theorem}
  Given any exponential polynomial sequence $f : \ZZ^s \to \CC$ of the
  form (\ref{eq:fsequence}), there exists a finite set $Q \subset \Pi$
  of polynomials, the so--called \emph{Prony ideal} of $f$, such that 
  $$
  \Span \{ \tau^\alpha f : \alpha \in \ZZ^s \} = \ker Q(\tau),
  $$
  and the set $Q$ can be constructed from finitely many values of $f$.
\end{theorem}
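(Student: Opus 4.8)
The strategy is to make the statement quantitative by identifying $\ker Q(\tau)$ with the finite-dimensional translation-invariant space $V := \Span\{\tau^\alpha f : \alpha\in\ZZ^s\}$ and then describing $Q$ as the full annihilator ideal of $V$ under the shift action. First I would observe that $V$ is finite dimensional: each summand $f_\omega(\cdot)\,e^{\omega^T\cdot}$ has the property that its shifts span $\cP_\omega\,e^{\omega^T\cdot}$ where $\cP_\omega$ is the (finite-dimensional, translation-invariant) span of the shifts of the polynomial $f_\omega$, and since the frequencies $\omega\in\Omega$ are distinct the exponentials $e^{\omega^T\cdot}$ are linearly independent over $\Pi$, so $V = \bigoplus_{\omega\in\Omega}\cP_\omega\,e^{\omega^T\cdot}$ and $\dim V = \sum_{\omega\in\Omega}\dim\cP_\omega < \infty$. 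This is exactly the shape of finite-dimensional kernels recalled from \cite{Sauer16:_kernels}.

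Next, define $Q := I(V) = \{ q\in\Pi : q(\tau)u = 0 \text{ for all } u\in V\}$. This is an ideal of $\Pi$: it is clearly a subspace, and if $q\in Q$ and $p\in\Pi$ then $(pq)(\tau)u = p(\tau)\,q(\tau)u = 0$ for $u\in V$, using that $\Pi$ is commutative so the shift operators commute. Since $\Pi$ is Noetherian, $Q$ is finitely generated, so we may replace it by a finite generating set, which is the finite set of polynomials claimed in the theorem. The inclusion $V \subseteq \ker Q(\tau)$ is immediate from the definition of $Q$. For the reverse inclusion I would use that $\Pi/Q$ is finite dimensional — equivalently that $V^{\perp}$ in the duality between $\Pi$ and its dual is finite-codimensional — which forces $\ker Q(\tau)$ to have dimension at most $\dim(\Pi/Q)$; combined with $V\subseteq\ker Q(\tau)$ and the fact (established in the later sections via the Hermite-interpolation ideal projector of Section~\ref{sec:IdealHermite}) that this dimension count is sharp, one gets equality. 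Concretely: the later construction produces an ideal projector onto a space of dimension exactly $\dim V$ whose kernel ideal annihilates $V$, and by maximality of $Q$ among annihilating ideals these two ideals coincide, pinning down $\ker Q(\tau) = V$.

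Finally, for the \emph{constructibility from finitely many values} I would invoke the Hankel-matrix factorization of Theorem~\ref{T:HankelFact}: the shift relations satisfied by $f$ are encoded by the kernel of a Hankel matrix built from the samples $f(\alpha)$ on a sufficiently large finite grid $\Lambda$, and the factorization in terms of Vandermonde and multiplicity (confluent Vandermonde) blocks shows that this kernel stabilizes once $\Lambda$ is large enough relative to $\#\Omega$ and the degrees $\deg f_\omega$ — after which it returns a generating set of $Q$ together with multiplication tables, exactly as in \cite{Sauer15S,Sauer2016A}. The main obstacle, and the part deferred to the technical sections, is proving that a \emph{finite} sample set already suffices to recover \emph{all} of $Q$ — i.e., that no relations are missed — which is precisely what the Hankel factorization of Section~\ref{sec:Prony} delivers; here I would only indicate that the generalized Hermite/Stirling-number machinery of Sections~\ref{sec:Stirling}--\ref{sec:IdealHermite} is what makes the confluent Vandermonde blocks explicit and hence makes the stabilization bound effective.
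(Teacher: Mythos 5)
The paper never gives a standalone proof of this theorem: the text immediately preceding it says explicitly that it is a ``reformulation of the construction from the subsequent sections,'' and the actual content is distributed across Theorems~\ref{T:DhatIdeal}, \ref{T:UniversalSapce}, \ref{T:HankelFact} and \ref{T:HilbFun}. Your three-part outline (finite-dimensional translation-invariant decomposition of $V$, annihilator ideal $Q = V^\perp$, Hankel factorization for finite constructibility) tracks that same plan closely, so the overall route is the right one.

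The soft spot is the reverse inclusion $\ker Q(\tau) \subseteq V$. You argue that $\dim \ker Q(\tau) \le \dim(\Pi/Q)$ and then invoke sharpness of that bound. Two things are being glossed over here. First, the inequality itself is not automatic: the natural map $\ker Q(\tau) \to (\Pi/Q)^*$, $u \mapsto \bigl( p + Q \mapsto (p(\tau)u)(0)\bigr)$, must be shown to be injective, and since $u$ lives on all of $\ZZ^s$ this requires that negative shifts can be expressed modulo $Q$ in terms of non-negative ones, i.e.\ that each coordinate function $x_j$ is invertible in $\Pi/Q$. Second, and more importantly, the \emph{equality} $\dim \ker Q(\tau) = \dim(\Pi/Q)$ is simply false for general zero-dimensional ideals: already in $s=1$ with $Q = \langle x(x-1)\rangle$ one has $\dim \Pi/Q = 2$ while $\ker Q(\tau)$ consists of the constants only. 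Both issues are resolved by the fact that in Prony's setting the common zeros are $\xi_\omega = e^\omega \in \CC_*^s$, which is exactly the hypothesis $\xi \in \CC_*^s$ in Theorem~\ref{T:DhatIdeal}. Your proposal silently relies on this but never states it; since the entire equivalence between shift-invariant multiplicity spaces and $D$-invariant ones hinges on that nonvanishing condition, it should be made explicit rather than swept into ``the dimension count is sharp.''

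A second, smaller point: you identify $V = \Span\{\tau^\alpha f : \alpha \in \ZZ^s\}$ with the $\Pi$-module generated by $f$ under $q \mapsto q(\tau)f$, but polynomials $q$ only produce shifts by multiindices in $\NN_0^s$. That these two spans agree again uses invertibility of the $x_j$ modulo $Q$, i.e.\ the same $\CC_*^s$ condition. Once that is supplied, the argument closes up, and the constructibility part via Theorem~\ref{T:HankelFact} and Theorem~\ref{T:HilbFun} is exactly what the paper does.
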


\section{Stirling numbers and invariant spaces of polynomials}
\label{sec:Stirling}
The classical \emph{Stirling numbers} of the second kind, written as
$\Stirl{n}{k}$ in Karamata's notation,
cf. \cite[p.~257ff]{GrahamKnuthPatashnik98}, can be defined as
\begin{equation}
  \label{eq:Stirl1d}
  \Stirl{n}{k} := \frac1{k!} \sum_{j=0}^k (-1)^{k-j} {k \choose j} \, j^n.
\end{equation}
One important property is that they are \emph{differences of zero}
\cite{gould71:_noch_stirl_zahlen}, which means that 
$$
\Stirl{n}{k} = \frac1{k!} \Delta^k 0^n := \frac1{k!} \left( \Delta^k (\cdot)^n
\right) (0).
$$
Since this will turn out to be a very useful property, we define the
multivariate Stirling numbers of the second kind for $\nu,\kappa \in
\ZZ^s$ as
\begin{equation}
  \label{eq:StirlingMulti}
  \Stirl{\nu}{\kappa} := \frac1{\kappa!} \left( \Delta^\kappa (\cdot)^\nu
  \right) (0) = \frac1{\kappa!} \sum_{\gamma \le \kappa}
  (-1)^{|\kappa| - |\gamma|} {\kappa \choose \gamma} \, \gamma^\nu,
\end{equation}
with the convention that $\Stirl{\nu}{\kappa} = 0$ if $\kappa \not\le
\nu$ where $\alpha \le \beta$ if $\alpha_j \le \beta_j$,
$j=1,\dots,s$. Moreover, we use the usual definition
$$
{\kappa \choose \gamma} := \prod_{j=1}^s {\kappa_j \choose \gamma_j}.
$$
The identity in (\ref{eq:StirlingMulti}) follows from
the definition of the difference operator
$$
\Delta^\kappa := ( \tau - I )^\kappa, \qquad \tau p := \left[
  p(\cdot+\epsilon_j) : j =1,\dots,s \right],  \quad p \in \Pi,
$$
by straightforward computations. From \cite{Sauer16:_kernels} we
recall the degree preserving operator
\begin{equation}
  \label{eq:LOperator}
  L p := \sum_{|\gamma| \le \deg p} \frac1{\gamma!} \Delta^\gamma p
  (0) \, (\cdot)^\gamma, \qquad p \in \Pi,
\end{equation}
which has a representation in terms of Stirling numbers: if $p =
\sum_\alpha p_\alpha \, (\cdot)^\alpha$, then
$$
L p := \sum_{|\gamma| \le \deg p} (\cdot)^\gamma \,
\sum_{|\alpha| \le \deg p} p_\alpha \frac1{\gamma!} \left(
  \Delta^\gamma (\cdot)^\alpha \right) (0)
= \sum_{|\gamma| \le \deg p} \left( \sum_{|\alpha| \le \deg p}
  \Stirl{\alpha}{\gamma} \, p_\alpha \right) \, (\cdot)^\gamma,
$$
that is,
\begin{equation}
  \label{eq:StirlingMultiplier}
  ( Lp )_\alpha = \sum_{\beta \in \NN_0^s}  \Stirl{\beta}{\alpha} \,
  p_\beta, \qquad \alpha \in \NN_0^s.
\end{equation}
With the \emph{Pochhammer symbols} or \emph{falling
  factorials}
\begin{equation}
  \label{eq:PochhammerDef}
  (\cdot)_\alpha := \prod_{j=1}^s \prod_{k=0}^{\alpha_j-1} \left(
    (\cdot)_j - k \right),
\end{equation}
the inverse of $L$ takes the form
\begin{equation}
  \label{eq:L-1Operator}
  L^{-1} p := \sum_{|\gamma| \le \deg p} \frac1{\gamma!} D^\gamma p
  (0) \, (\cdot)_\gamma,
\end{equation}
see again \cite{Sauer16:_kernels}. The \emph{Stirling
  numbers of first kind}
\begin{equation}
  \label{eq:Stirling1stDef}
  \Stirli{\nu}{\kappa} := \frac1{\kappa!} \left( D^\kappa (\cdot)_\nu
  \right) (0),
\end{equation}
allow us to express the inverse $L^{-1}$ in analogous way for the
coefficients of the representation $p =
\sum_\alpha \hat p_\alpha \, (\cdot)_\alpha$. Indeed,
$$
\left( L^{-1} p \right)^\wedge_\alpha = \sum_{\beta \in \NN_0^s}
\Stirli{\beta}{\alpha} \, \hat p_\beta.
$$
By the Newton
interpolation formula for integer sites,
cf. \cite{IsaacsonKeller66,Steffensen27}, and the Taylor formula we
then get
\begin{eqnarray*}
  (\cdot)^\alpha
  & = & \sum_{\beta \le \alpha} \frac1{\beta!} \left( \Delta^\beta
        (\cdot)^\alpha \right) (0) \, (\cdot)_\beta
        = \sum_{\beta \in \NN_0^s}
        \Stirl{\alpha}{\beta} \, (\cdot)_\beta \\
  & = & \sum_{\beta \in \NN_0^s}
        \Stirl{\alpha}{\beta} \, \sum_{\gamma \le 
        \beta} \frac1{\gamma!} \left( D^\gamma (\cdot)_\beta \right)
        (0) \, (\cdot)^\gamma
        = \sum_{\beta,\gamma \in \NN_0^s} \Stirl{\alpha}{\beta}
        \Stirli{\beta}{\gamma} \, (\cdot)^\gamma
\end{eqnarray*}
from which a comparison of coefficients yields the extension of the
well--known duality between the Stirling numbers of the two kinds to
the multivariate case:
\begin{equation}
  \label{eq:StirlDual}
  \sum_{\beta \in \NN_0^s} \Stirl{\alpha}{\beta} \,
  \Stirli{\beta}{\gamma}
  = \delta_{\alpha,\gamma}, \qquad \alpha,\beta \in \NN_0^s.
\end{equation}
Moreover, the multivariate Stirling numbers satisfy a recurrence
similar to the univariate case. To that end, note that the Leibniz
rule for the forward difference, cf. \cite{deBoor05}, yields
$$
\Delta^\kappa (\cdot)^{\nu+\epsilon_j} = \Delta^\kappa \left(
  (\cdot)^\nu \, (\cdot)^{\epsilon_j} \right)
= \kappa_j \, \Delta^\kappa (\cdot)^\nu + \Delta^{\kappa-\epsilon_j} \,
(\cdot)^\nu,
$$
which we substitute into (\ref{eq:StirlingMulti}) to obtain the recurrence
\begin{equation}
  \label{eq:Stirl2ndRek}
  \Stirl{\nu+\epsilon_j}{\kappa}
  = \frac1{\kappa!} \left( \kappa_j \, \Delta^\kappa (\cdot)^\nu +
    \Delta^{\kappa-\epsilon_j}  (\cdot)^\nu \right) (0)
  = \kappa_j \Stirl{\nu}{\kappa} + \Stirl{\nu}{\kappa-\epsilon_j}.
\end{equation}
The operator $L$ also can be used to relate structures between
polynomial subspaces.

\begin{remark}
  Except \cite{schreiber15:_multiv_sterl}, which however does not
  connect to the above, I was not able to find references about
  \emph{multivariate} Stirling numbers, so the above simple and elementary
  proofs are added for the sake of completeness. Nevertheless, Gould's
  statement from \cite{gould71:_noch_stirl_zahlen} may well be true:
  ``\textsl{\dots aber es mag von Interesse sein, da{\ss} mindestestens
    tausend Abhandlungen in der Literatur existieren, die sich mit den
    Stirlingschen Zahlen besch\"aftigen. Es ist also sehr schwer,
    etwas Neues \"uber die Stirlingschen Zahlen zu entdecken.}''
\end{remark}

\begin{definition}
  A subspace ${\mathcal P}$ of $\Pi$ is called \emph{shift invariant}
  if
  \begin{equation}
    \label{eq:ShiftInvarDef}
    p \in {\mathcal P} \qquad \Leftrightarrow \qquad
    p ( \cdot + \alpha ) \in {\mathcal P}, \quad \alpha \in \NN_0^s,
  \end{equation}
  and it is called $D$--invariant if
  \begin{equation}
    \label{eq:DInvarDef}
    p \in {\mathcal P} \qquad \Leftrightarrow \qquad
    D^\alpha p \in {\mathcal P}, \quad \alpha \in \NN_0^s,
  \end{equation}
  where $D^\alpha = \frac{\partial^{|\alpha|}}{\partial x^\alpha}$.
  The \emph{principal shift-} and \emph{$D$--invariant} spaces for a
  polynomial $p \in \Pi$ are defined as
  \begin{equation}
    \label{eq:PrincInvar}
    {\mathcal S} (p) := \Span \{ p(\cdot+\alpha) : \alpha \in \NN_0^s
    \}, \qquad
    {\mathcal D} (p) := \Span \{ D^\alpha p : \alpha \in \NN_0^s
    \},
  \end{equation}
  respectively.
\end{definition}

\begin{proposition}\label{P:LShiftD}
  A subspace ${\mathcal P}$ of $\Pi$ is \emph{shift invariant} if and
  only if $L {\mathcal P}$ is $D$--invariant.
\end{proposition}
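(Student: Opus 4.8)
The plan is to show that the operator $L$ conjugates the coordinatewise forward difference $\Delta_j := \tau_j - I$ into the partial derivative $D_j := \partial/\partial x_j$, and then to read both invariance properties off from closure under these ``atomic'' operators.

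The core step is the intertwining identity $D_j L = L \Delta_j$ for $j = 1,\dots,s$. To obtain it, note first that $L$ is exactly the (bijective, degree-preserving) linear map of $\Pi$ that sends each falling factorial $(\cdot)_\beta$ to the monomial $(\cdot)^\beta$: indeed $\left( \Delta^\gamma (\cdot)_\beta \right)(0) = \beta!\, \delta_{\gamma,\beta}$, so (\ref{eq:LOperator}) gives $L (\cdot)_\beta = (\cdot)^\beta$ directly, and (\ref{eq:L-1Operator}) together with $\left( D^\gamma (\cdot)^\beta \right)(0) = \beta!\, \delta_{\gamma,\beta}$ shows that $L^{-1}$ inverts this. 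Since $\{ (\cdot)_\beta : \beta \in \NN_0^s \}$ is a basis of $\Pi$, it is enough to check $D_j L = L \Delta_j$ on it, where it reduces to the two elementary ``lowering'' rules
$$
\Delta_j (\cdot)_\beta = \beta_j \, (\cdot)_{\beta - \epsilon_j},
\qquad
D_j (\cdot)^\beta = \beta_j \, (\cdot)^{\beta - \epsilon_j},
$$
the first being the instance in the $j$th variable of $(x+1)_n - (x)_n = n\,(x)_{n-1}$. Applying $L$ to the first rule and using $L (\cdot)_{\beta - \epsilon_j} = (\cdot)^{\beta - \epsilon_j}$ yields $L \Delta_j (\cdot)_\beta = \beta_j\, (\cdot)^{\beta - \epsilon_j} = D_j (\cdot)^\beta = D_j L (\cdot)_\beta$, as required; iterating gives $D^\alpha L = L \Delta^\alpha$ for every $\alpha \in \NN_0^s$.

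The rest is bookkeeping about generating families of operators. A subspace ${\mathcal P}$ is shift invariant if and only if $\tau_j {\mathcal P} \subseteq {\mathcal P}$ for $j = 1,\dots,s$ (every $\tau^\alpha$ being a composition of the $\tau_j$), and since $\tau_j = I + \Delta_j$, this holds if and only if $\Delta_j {\mathcal P} \subseteq {\mathcal P}$ for $j = 1,\dots,s$. In the same way, $L {\mathcal P}$ is $D$--invariant if and only if $D_j (L {\mathcal P}) \subseteq L {\mathcal P}$ for $j = 1,\dots,s$. If ${\mathcal P}$ is shift invariant, then $D_j (L {\mathcal P}) = L (\Delta_j {\mathcal P}) \subseteq L {\mathcal P}$, so $L {\mathcal P}$ is $D$--invariant; conversely, if $L {\mathcal P}$ is $D$--invariant, then $L (\Delta_j {\mathcal P}) = D_j (L {\mathcal P}) \subseteq L {\mathcal P}$, and applying the bijection $L^{-1}$ gives $\Delta_j {\mathcal P} \subseteq {\mathcal P}$, so ${\mathcal P}$ is shift invariant. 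The only genuine content is the intertwining identity of the second paragraph; the mild obstacle is simply keeping track of which generating set ($\tau^\alpha$ versus the single $\tau_j$, or $D^\alpha$ versus the single $D_j$) is convenient at each stage.
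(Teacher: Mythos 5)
Your proof is correct, and its core is the same intertwining identity $D^\alpha L = L\Delta^\alpha$ that the paper also uses. The difference is mostly in presentation and completeness: the paper proves only the direction ``${\mathcal P}$ shift invariant $\Rightarrow L{\mathcal P}$ $D$--invariant'' and cites Lemma~3 of the kernels paper for the converse, whereas you give a fully self-contained argument for both directions by running the same identity through $L^{-1}$. Your derivation of the intertwining is also a bit more conceptual: rather than manipulating the defining sum for $L$ directly as the paper does, you first identify $L$ as the linear isomorphism sending the falling-factorial basis $(\cdot)_\beta$ to the monomial basis $(\cdot)^\beta$ (which follows cleanly from $\Delta^\gamma (\cdot)_\beta(0) = \beta!\,\delta_{\gamma,\beta}$), then check $D_j L = L\Delta_j$ on basis elements via the parallel lowering rules, and finally reduce shift- and $D$-invariance to closure under the single generators $\Delta_j$ and $D_j$. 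Both routes are sound; yours trades the paper's direct index-shift computation for a basis-change viewpoint that makes the diagonal structure of $L$ explicit and removes the external citation.
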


\begin{proof}
  The direction ``$\Leftarrow$'' has been shown in
  \cite[Lemma~3]{Sauer16:_kernels}, so assume that ${\mathcal P}$ is shift
  invariant and consider, for some $\alpha \in \NN_0^s$,
  \begin{eqnarray*}
    D^\alpha L p
    & = & D^\alpha \sum_{|\gamma| \le \deg p} \frac1{\gamma!} \Delta^\gamma p
          (0) \, (\cdot)^\gamma \\
    & = & \sum_{\gamma \ge \alpha} \frac1{(\gamma-\alpha)!}
          \Delta^{\gamma-\alpha} \left( \Delta^\alpha p \right) (0) \,
          ( \cdot)^{\gamma-\alpha}
          = L \Delta^\alpha p,
  \end{eqnarray*}
  where $\Delta^\alpha p \in {\mathcal P}$ since the space is shift
  invariant. Hence $D^\alpha Lp \in L {\mathcal P}$ which proves that
  this space is indeed $D$--invariant.
  \qed
\end{proof}

\noindent
A simple and well--known consequence of Proposition~\ref{P:LShiftD}
can be recorded as follows.

\begin{corollary}
  A subspace ${\mathcal P}$ of $\Pi$ is invariant under integer shifts
  if and only if it is invariant under arbitrary shifts.
\end{corollary}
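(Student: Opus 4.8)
The plan is to prove the non--trivial implication, ``invariance under integer shifts implies invariance under arbitrary shifts'', directly by a Vandermonde argument in the shift parameter, and to comment afterwards on how this is the elementary shadow of Proposition~\ref{P:LShiftD}.

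First I would dispose of the implication ``$\Leftarrow$'': integer shifts are a special case of arbitrary shifts, so there is nothing to prove. I would also observe that for ``invariance under arbitrary shifts'' it suffices to verify the forward direction of the equivalence: if $p \in \mathcal{P}$ implies $p(\cdot + y) \in \mathcal{P}$ for every $y \in \CC^s$, then applying this with $p$ replaced by $p(\cdot + y)$ and $y$ replaced by $-y$ yields the converse. Hence everything reduces to showing: if $\mathcal{P}$ is invariant under integer shifts and $p \in \mathcal{P}$, then $p(\cdot + y) \in \mathcal{P}$ for all $y \in \CC^s$.

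For this, fix $p \in \mathcal{P}$, set $n = \deg p$, and fix a coordinate direction $\epsilon_j$. The function $(x,t) \mapsto p(x + t\epsilon_j)$ is a polynomial in $t$ of degree at most $n$, so $p(\cdot + t\epsilon_j) = \sum_{k=0}^{n} t^k\, q_k$ with polynomials $q_k \in \Pi_n$ (depending on $j$). Since $\mathcal{P}$ is invariant under integer shifts, the iterated unit shifts $p(\cdot + l\epsilon_j)$, $l = 0,1,\dots,n$, all lie in $\mathcal{P}$; specialising $t = l$ gives the linear system $p(\cdot + l\epsilon_j) = \sum_{k=0}^{n} l^k q_k$, $l = 0,\dots,n$, whose coefficient matrix $(l^k)_{l,k=0}^{n}$ is an invertible Vandermonde matrix for the nodes $0,1,\dots,n$. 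Inverting it expresses each $q_k$ as a finite $\CC$--linear combination of the $p(\cdot + l\epsilon_j) \in \mathcal{P}$, so $q_k \in \mathcal{P}$, and therefore $p(\cdot + t\epsilon_j) = \sum_{k=0}^{n} t^k q_k \in \mathcal{P}$ for \emph{every} $t \in \CC$: the space $\mathcal{P}$ is invariant under all shifts parallel to the coordinate axes. Writing $y = y_1\epsilon_1 + \cdots + y_s\epsilon_s$ and composing the resulting axis--parallel shifts then gives $p(\cdot + y) \in \mathcal{P}$ for arbitrary $y \in \CC^s$, which completes the argument.

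Finally I would point out the conceptual reading behind the name ``consequence of Proposition~\ref{P:LShiftD}'': shift invariance of $\mathcal{P}$ is equivalent to $D$--invariance of $L\mathcal{P}$, and $D$--invariance is, via the Taylor expansion $q(\cdot + y) = \sum_{|\alpha| \le \deg q} \frac{y^\alpha}{\alpha!} D^\alpha q$, nothing but invariance under arbitrary shifts. The only point that needs care --- and the reason the statement deserves to be called ``simple'' --- is the uniformity that makes finitely many integer shift values already determine the whole one--parameter family $t \mapsto p(\cdot + t\epsilon_j)$: the degree of $p(x + t\epsilon_j)$ in $t$ is bounded a priori by $\deg p$, which is the counterpart of $L$ being degree preserving, and one genuinely uses that $\mathcal{P}$ is a \emph{subspace}, so that it is closed under the linear combinations produced by the inverse Vandermonde matrix. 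Beyond that, there is no real obstacle.
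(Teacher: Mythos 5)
Your proof is correct, but it takes a genuinely different route from the paper. The paper dispatches the corollary in one line by invoking Proposition~\ref{P:LShiftD}: shift invariance of $\mathcal{P}$ gives $D$--invariance of $L\mathcal{P}$, and then a cited lemma from \cite{Sauer16:_kernels} shows that $D$--invariant spaces are invariant under arbitrary translations, so $p \in \mathcal{P} = L^{-1}L\mathcal{P}$ forces $p(\cdot+y) \in \mathcal{P}$ for all $y \in \CC^s$. You instead give a self-contained Vandermonde argument in the shift parameter: since $t \mapsto p(\cdot + t\epsilon_j)$ is a polynomial of degree at most $\deg p =: n$ with polynomial coefficients $q_k$, the $n+1$ integer samples $p(\cdot + l\epsilon_j)$, $l = 0,\dots,n$, already lie in $\mathcal{P}$ and the invertible Vandermonde matrix $(l^k)$ recovers each $q_k$ as a linear combination of them, so the whole one-parameter family lies in $\mathcal{P}$; composing axis-parallel shifts finishes. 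Both arguments are sound. Yours is more elementary and needs no external lemma or the operator $L$ at all; the paper's is shorter on the page because it cashes in the machinery it has already built, and it is the version that makes visible the shift/$D$ duality that the rest of the paper exploits. Your closing paragraph correctly identifies the bridge between the two: degree preservation of $L$ is exactly the a priori degree bound on $t \mapsto p(\cdot+t\epsilon_j)$ that makes finitely many integer samples determine the whole family. One small inefficiency: your preliminary reduction of the ``if and only if'' to one direction is unnecessary, since the converse is already trivial by taking $y=0$; but this costs nothing.
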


\begin{proof}
  If together with $p$ also all $p(\cdot+\alpha)$ belong to $\mathcal
  P$ then, by Proposition~\ref{P:LShiftD}, the space $L {\mathcal P}$
  is $D$--invariant from which it follows by
  \cite[Lemma~3]{Sauer16:_kernels} that $p \in \mathcal P = L^{-1} L
  {\mathcal P}$ implies $p(\cdot + y) \in \mathcal P$, $y \in \CC^s$.
\end{proof}

\begin{proposition}\label{P:LSDL}
  For $q \in \Pi$ we have that $L \cS( q ) = \cD ( Lq )$.
\end{proposition}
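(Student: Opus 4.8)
The plan is to reduce the statement to the single intertwining identity $D^\alpha L p = L \Delta^\alpha p$ that already appeared inside the proof of Proposition~\ref{P:LShiftD}, together with the observation that a principal shift-invariant space is also spanned by forward differences.

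First I would record that, for any $q \in \Pi$,
$$
\cS(q) = \Span \{ q(\cdot + \alpha) : \alpha \in \NN_0^s \}
       = \Span \{ \Delta^\alpha q : \alpha \in \NN_0^s \} .
$$
The inclusion ``$\supseteq$'' is immediate, since $\Delta^\alpha = (\tau - I)^\alpha$ expands coordinatewise into a linear combination of the shifts $\tau^\beta$, $\beta \le \alpha$. For ``$\subseteq$'' one uses the converse binomial relation $\tau^\alpha = (I + \Delta)^\alpha = \sum_{\beta \le \alpha} {\alpha \choose \beta} \Delta^\beta$, so that every shift $q(\cdot+\alpha)$ is a finite linear combination of the differences $\Delta^\beta q$, $\beta \le \alpha$.

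Next, since $L$ is linear it carries the span of a set to the span of the images, hence $L\cS(q) = \Span \{ L \Delta^\alpha q : \alpha \in \NN_0^s \}$. Applying the identity $L \Delta^\alpha p = D^\alpha L p$ with $p = q$, which was verified by a direct computation in the proof of Proposition~\ref{P:LShiftD}, yields
$$
L \cS(q) = \Span \{ D^\alpha L q : \alpha \in \NN_0^s \} = \cD(Lq),
$$
which is the assertion.

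I do not expect a serious obstacle here; the one point deserving a little care is the first step, namely the coincidence of the shift span $\cS(q)$ with the difference span, which rests on the mutual binomial expansions of $\tau^\alpha$ in the $\Delta^\beta$ and vice versa, and on the coordinatewise commutativity of $\tau$, $I$ and $\Delta$. Alternatively one could invoke Proposition~\ref{P:LShiftD} to know a priori that $L\cS(q)$ is $D$-invariant and then only identify its generator as $Lq$, but the direct computation above is shorter and has the advantage of exhibiting the generator explicitly.
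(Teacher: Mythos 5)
Your proof is correct, but it takes a genuinely different route from the paper's. The paper argues abstractly via Proposition~\ref{P:LShiftD}: $L\cS(q)$ is $D$-invariant and contains $Lq$, so $L\cS(q)\supseteq\cD(Lq)$; then $L^{-1}\cD(Lq)$ is shift invariant and contains $q$, so $L^{-1}\cD(Lq)\supseteq\cS(q)$, and applying $L$ gives the reverse inclusion. Your argument instead works directly with generating sets: you first show $\cS(q)=\Span\{\Delta^\alpha q:\alpha\in\NN_0^s\}$ using the two binomial expansions $\Delta^\alpha=(\tau-I)^\alpha$ and $\tau^\alpha=(I+\Delta)^\alpha$, then push the span through $L$ using the intertwining identity $L\Delta^\alpha p=D^\alpha Lp$ that appears inside the proof of Proposition~\ref{P:LShiftD}. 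Both arguments are sound; yours is more explicit and exhibits how each generator $\tau^\alpha q$ of $\cS(q)$ maps to a combination of generators $D^\beta Lq$ of $\cD(Lq)$, whereas the paper's is shorter if one treats Proposition~\ref{P:LShiftD} purely as a black box.

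One small caution: the ``alternative'' you sketch in your closing remark --- invoking Proposition~\ref{P:LShiftD} to know that $L\cS(q)$ is $D$-invariant and ``then only identify its generator as $Lq$'' --- is not by itself complete. Knowing that $L\cS(q)$ is $D$-invariant and contains $Lq$ gives only $L\cS(q)\supseteq\cD(Lq)$; it does not rule out that $L\cS(q)$ is strictly larger than the principal $D$-invariant space generated by $Lq$. The paper closes this gap with a second application of Proposition~\ref{P:LShiftD} to $L^{-1}\cD(Lq)$. Your main, computational proof does not suffer from this issue, since the equality of spans is established directly in both directions.
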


\begin{proof}
  By Proposition~\ref{P:LShiftD}, $L \cS (q)$ is a $D$--invariant space
  that contains $Lq$, hence $L \cS (q) \supseteq \cD ( Lq )$. On the
  other hand $L^{-1} \cD ( Lq )$ is a shift invariant space containing
  $Lq$, hence
  $$
  L^{-1} \cD ( Lq ) \supseteq \cS (L^{-1} L q ) = \cS (q),
  $$
  and applying the invertible operator $L$ to both sides of the
  inclusion yields that $L \cS (q) \subseteq \cD ( Lq )$ and completes
  the proof.
  \qed
\end{proof}

\noindent
Stirling numbers do not only relate invariant spaces, they also are
useful for studying another popular differential operator. To that end, we
define the partial differential operators
\begin{equation}
  \label{eq:thetaOpDef}
  \frac{\hat \partial}{\hat \partial x_j} = (\cdot)_j
  \frac{\partial}{\partial x_j}
  \qquad \mbox{and} \qquad
  \hat D^\alpha := \frac{\hat \partial^\alpha}{\hat \partial
    x^\alpha}, \qquad \alpha \in \NN_0^s,
\end{equation}
also known as \emph{$\theta$--operator} in the univariate case. Recall
that the multivariate $\theta$--operator is usually of the form
$$
\sum_{|\alpha| = n} \hat D^\alpha
$$
and its eigenfunctions are the homogeneous polynomials, the associated
eigenvalues is their total degree. Here, however, we need the partial
$\theta$--operators. To
relate differential operators based on $\hat D$ to standard
differential operators, we use the notation $( \xi D)^\alpha :=
\xi^\alpha D^\alpha$ for the \emph{$\xi$ scaled} partial derivatives,
$\xi \in \CC^s$ and use, as common, $\CC_* := \CC \setminus \{ 0 \}$.

\begin{theorem}\label{T:thetaOpDiffOp}
  For any $q \in \Pi$ and $\xi \in \CC^s$ we have that
  \begin{equation}
    \label{eq:thetaOpDiffOp}
    \left( q(\hat D) \right) p (\xi) = \left( Lq ( \xi D ) \right) p
    (\xi), \qquad p \in \Pi.
  \end{equation}
\end{theorem}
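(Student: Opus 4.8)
The plan is to reduce the claim to the classical one-dimensional identity that expresses powers of the $\theta$-operator $(\cdot)_j\,\partial/\partial x_j$ through ordinary derivatives with Stirling numbers of the second kind as coefficients, and then to propagate it to $s$ variables using two factorization facts: the partial operators $\hat\partial/\hat\partial x_j$, $j=1,\dots,s$, commute pairwise, and both the operator $L$ and the multivariate Stirling numbers split over the coordinate directions.

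First I would prove, for a single index $j$ and $n\in\NN_0$, the identity
$$
  \left( (\cdot)_j \frac{\partial}{\partial x_j} \right)^n
  = \sum_{k\ge 0} \Stirl{n}{k}\,(\cdot)_j^k\, \frac{\partial^k}{\partial x_j^k}
$$
by induction on $n$. The case $n=0$ is trivial, and the step follows from the elementary relation $(\cdot)_j\partial_j\bigl((\cdot)_j^k\partial_j^k\bigr) = k\,(\cdot)_j^k\partial_j^k + (\cdot)_j^{k+1}\partial_j^{k+1}$ together with the univariate instance $\Stirl{n+1}{k} = k\,\Stirl{n}{k} + \Stirl{n}{k-1}$ of the recurrence~(\ref{eq:Stirl2ndRek}). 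Since the operators $\hat\partial/\hat\partial x_j$ pairwise commute, taking the product of this identity over $j=1,\dots,s$ and using $\Stirl{\alpha}{\beta}=\prod_{j=1}^s \Stirl{\alpha_j}{\beta_j}$ — which is immediate from~(\ref{eq:StirlingMulti}), since $\Delta^\kappa$ and $(\cdot)^\nu$ both factor over the coordinates — yields
$$
  \hat D^\alpha = \sum_{\beta\in\NN_0^s} \Stirl{\alpha}{\beta}\,(\cdot)^\beta\, D^\beta, \qquad \alpha\in\NN_0^s.
$$

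It then remains to take a general $q = \sum_\alpha q_\alpha\,(\cdot)^\alpha$, sum the previous display against the coefficients $q_\alpha$, and recognize the resulting coefficient of $(\cdot)^\beta D^\beta$ as $\sum_\alpha \Stirl{\alpha}{\beta}\,q_\alpha = (Lq)_\beta$ by the multiplier formula~(\ref{eq:StirlingMultiplier}); thus $q(\hat D) = \sum_{\beta\in\NN_0^s} (Lq)_\beta\,(\cdot)^\beta D^\beta$ as operators on $\Pi$. Evaluating this operator identity at the point $\xi$ gives, for every $p\in\Pi$,
$$
  \bigl( q(\hat D) \bigr) p(\xi)
  = \sum_{\beta\in\NN_0^s} (Lq)_\beta\,\xi^\beta\,(D^\beta p)(\xi)
  = \sum_{\beta\in\NN_0^s} (Lq)_\beta\,(\xi D)^\beta p(\xi)
  = \bigl( Lq(\xi D) \bigr) p(\xi),
$$
which is~(\ref{eq:thetaOpDiffOp}). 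I do not expect a genuine obstacle here; the only parts that need care are the multivariate index bookkeeping and making the two factorization facts explicit, so that the univariate $\theta$-operator computation carries over to $s$ variables essentially verbatim.
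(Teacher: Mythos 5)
Your proof is correct and follows essentially the same path as the paper: both hinge on establishing the operator identity $\hat D^\alpha = \sum_{\beta} \Stirl{\alpha}{\beta}\,(\cdot)^\beta D^\beta$, then summing against the coefficients of $q$ via the multiplier formula (\ref{eq:StirlingMultiplier}) and evaluating at $\xi$. The only (cosmetic) difference is that the paper proves the key identity by a direct multivariate induction on $\alpha \to \alpha + \epsilon_j$ using the multivariate recurrence (\ref{eq:Stirl2ndRek}), whereas you first do the univariate case and then tensorize via commutativity of the $\hat\partial/\hat\partial x_j$ and the coordinate factorization $\Stirl{\alpha}{\beta} = \prod_j \Stirl{\alpha_j}{\beta_j}$ — the same computation organized slightly differently.
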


\begin{proof}
  We prove by induction that 
  \begin{equation}
    \label{eq:TthetaOpDiffOpPf1}
    \hat D^\alpha = \sum_{\beta \le \alpha} \Stirl{\alpha}{\beta} \,
    (\cdot)^\beta \, D^\beta, \qquad \alpha \in \NN_0^s,
  \end{equation}
  which is trivial for $\alpha = 0$. The inductive step uses the
  Leibniz rule to show that
  \begin{eqnarray*}
    \hat D^{\alpha + \epsilon_j}
    & = & x_j \frac{\partial}{\partial x_j} \, \sum_{\beta \le \alpha}
    \Stirl{\alpha}{\beta} \, (\cdot)^\beta \, D^\beta
    = \sum_{\beta \le \alpha}
    \Stirl{\alpha}{\beta} \, \left( \beta_j (\cdot)^\beta \, D^\beta +
      (\cdot)^{\beta+\epsilon_j} \, D^{\beta+\epsilon_j} \right) \\
    & = & \sum_{\beta \le \alpha + \epsilon_j} \left( \beta_j \,
      \Stirl{\alpha}{\beta} + \Stirl{\alpha}{\beta - \epsilon_j}
    \right) \, (\cdot)^\beta \, D^\beta,
  \end{eqnarray*}
  from which (\ref{eq:TthetaOpDiffOpPf1}) follows by taking into
  account the recurrence (\ref{eq:Stirl2ndRek}). Thus, by
  (\ref{eq:StirlingMultiplier}),
  $$
  q(\hat D) = \sum_{\alpha \in \NN_0^s} q_\alpha \, \hat D^\alpha
  = \sum_{\alpha \in \NN_0^s} q_\alpha \, \sum_{\beta \in \NN_0^s}
  \Stirl{\alpha}{\beta} \, (\cdot)^\beta \, D^\beta
  = \sum_{\beta \in \NN_0^s} ( Lq )_\beta \, (\cdot)^\beta \, D^\beta,
  $$
  and by applying the differential operator to $p$ and evaluating at
  $\xi$, we obtain (\ref{eq:thetaOpDiffOp}).
  \qed
\end{proof}

\section{Ideals and Hermite interpolation}
\label{sec:IdealHermite}

A set $I \subseteq \Pi$ of polynomials is called an \emph{ideal} in
$\Pi$ if it is closed under addition and multiplication with arbitrary
polynomials. A projection $P : \Pi \to \Pi$ is called an \emph{ideal
  projector}, cf. \cite{boor05:_ideal}, if $\ker P := \{ p \in \Pi :
Pp = 0
\}$ is an ideal. Ideal projectors with finite range are \emph{Hermite
  interpolants}, that is, projections $H : \Pi \to \Pi$ such that
\begin{equation}
  \label{eq:HermiteInterpolant}
  \left( q(D) Hp \right) (\xi) = q(D) p (\xi), \qquad q \in \cQ_\xi,
  \quad \xi \in \Xi,
\end{equation}
where $\cQ_\xi$ is a finite dimensional $D$--invariant subspace of
$\Pi$ and $\Xi \subset \CC^s$ is a finite set, cf
\cite{MarinariMoellerMora96}. A polynomial $p \in
\ker H$ vanishes at $\xi \in \Xi$ with \emph{multiplicity} $\cQ_\xi$,
see \cite{groebner37:_ueber_macaul_system_bedeut_theor_differ_koeff}
for a definition of multiplicity of the common zero of a set of
polynomials as a structured quantity.

A particular case is that $\cQ_\xi$ is a \emph{principal
  $D$--invariant space} of the form $\cQ_\xi = \cD
(q_\xi)$ for some $q_\xi \in \Pi$, i.e., the multiplicities are
generated by a single polynomial. We say that the respective Hermite
interpolation problem and the associated ideal are of \emph{principal
  multiplicity} in this case. By means of the differential operator
$\hat D$ these ideals are also created by shift invariant spaces.

\begin{theorem}\label{T:DhatIdeal}
  For a finite $\Xi \subset \CC^s$ and polynomials $q_\xi \in \Pi$,
  $\xi \in \Xi$, the polynomial space
  \begin{equation}
    \label{eq:DhatIdeal}
    \left\{ p \in \Pi : q(\hat D) p (\xi) = 0, q \in \cS ( q_\xi ),
      \xi \in \Xi \right\}
  \end{equation}
  is an ideal of principal multiplicity. Conversely, if $\xi \in
  \CC_*^s$ then any ideal of principal multiplicity can be written in
  the form (\ref{eq:DhatIdeal}).
\end{theorem}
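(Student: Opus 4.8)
The plan is to translate the conditions defining (\ref{eq:DhatIdeal}), which are built from the \emph{shift}--invariant space $\cS(q_\xi)$ through the operator $\hat D$, into ordinary Hermite interpolation conditions built from a \emph{$D$}--invariant space, and then to invoke the classical fact, recalled above, that Hermite interpolation with $D$--invariant local data is realized by an \emph{ideal} projector. The two bridges are the identity $q(\hat D)p(\xi) = (Lq)(\xi D)p(\xi)$ of Theorem~\ref{T:thetaOpDiffOp} and the invariance transfer $L\,\cS(q) = \cD(Lq)$ of Proposition~\ref{P:LSDL}.

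For the first assertion, fix $\xi \in \Xi$ and let $\sigma_\xi : \Pi \to \Pi$, $(\sigma_\xi p)(x) := p(\xi_1 x_1,\dots,\xi_s x_s)$, be the coordinate dilation, which satisfies $(\sigma_\xi r)(D) = r(\xi D)$ and $D^\alpha \sigma_\xi = \xi^\alpha \sigma_\xi D^\alpha$. Since $L$ maps $\cS(q_\xi)$ bijectively onto $L\,\cS(q_\xi)$, Theorem~\ref{T:thetaOpDiffOp} shows that the family $\{ q(\hat D)p(\xi) = 0 : q \in \cS(q_\xi) \}$ of conditions equals $\{ (\sigma_\xi \tilde q)(D)p(\xi) = 0 : \tilde q \in L\,\cS(q_\xi) \}$, and by Proposition~\ref{P:LSDL} this is $\{ r(D)p(\xi) = 0 : r \in \cQ_\xi \}$ with $\cQ_\xi := \sigma_\xi\bigl(\cD(Lq_\xi)\bigr)$. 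The commutation relation makes $\cQ_\xi$ finite dimensional and $D$--invariant, so (\ref{eq:DhatIdeal}) is exactly the set of solutions of the Hermite interpolation problem with nodes $\Xi$ and multiplicity spaces $\cQ_\xi$; being the kernel of an ideal projector \cite{boor05:_ideal,MarinariMoellerMora96}, it is an ideal. When $\xi \in \CC_*^s$ the dilation $\sigma_\xi$ is invertible, whence $\cQ_\xi = \cD\bigl(\sigma_\xi(Lq_\xi)\bigr)$ is \emph{principal} and the ideal has principal multiplicity.

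For the converse, let $I$ be an ideal of principal multiplicity, i.e.\ $I = \ker H$ for a Hermite interpolant with nodes $\Xi \subset \CC_*^s$ and multiplicity spaces $\cD(g_\xi)$, $\xi \in \Xi$. I would simply run the computation above backwards: put $q_\xi := L^{-1}\bigl(\sigma_{\xi^{-1}} g_\xi\bigr)$ with $\xi^{-1} := (\xi_1^{-1},\dots,\xi_s^{-1})$, which is legitimate since $L$ and $\sigma_{\xi^{-1}}$ are invertible on $\Pi$. Then $Lq_\xi = \sigma_{\xi^{-1}} g_\xi$, hence $\cQ_\xi = \sigma_\xi\bigl(\cD(Lq_\xi)\bigr) = \cD\bigl(\sigma_\xi\sigma_{\xi^{-1}} g_\xi\bigr) = \cD(g_\xi)$, so by the first part the conditions generated by $\cS(q_\xi)$ at $\xi$ are exactly the defining conditions of $I$; thus $I$ is of the form (\ref{eq:DhatIdeal}).

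The only genuinely non-formal ingredient is the classical statement that a Hermite interpolation problem whose local conditions form $D$--invariant spaces is solvable by an ideal projector, equivalently that the corresponding ``vanishing with prescribed $D$--invariant multiplicity'' set is an ideal; passing to $D$--invariant local conditions is precisely what forces one to control how $\sigma_\xi$ acts on the spaces $\cD(h)$, and I expect verifying $\sigma_\xi\cD(h) = \cD(\sigma_\xi h)$ for invertible $\sigma_\xi$, together with the identities $(\sigma_\xi r)(D) = r(\xi D)$ and $D^\alpha\sigma_\xi = \xi^\alpha\sigma_\xi D^\alpha$, to be the step needing the most care. The hypothesis $\xi \in \CC_*^s$ enters only to keep $\sigma_\xi$ invertible, which upgrades the multiplicity from $D$--invariant to \emph{principal} in the first part and makes the substitution reversible in the converse. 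The injectivity of $r \mapsto [p \mapsto r(D)p(\xi)]$, needed so that the correspondence between multiplicity spaces and interpolation functionals is faithful, is immediate.
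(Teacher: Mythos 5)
Your proof is correct and follows essentially the same route as the paper: translate the $\hat D$--conditions through $L$ into $D$--conditions via Theorem~\ref{T:thetaOpDiffOp}, carry the shift--invariant space $\cS(q_\xi)$ to the $D$--invariant space $\cD(Lq_\xi)$ by Proposition~\ref{P:LSDL}, dilate by $\xi$ to land at the multiplicity space $\cQ_\xi$, invoke the ideal--projector characterization of Hermite interpolation, and observe that invertibility of the dilation (i.e.\ $\xi\in\CC_*^s$) is what makes the construction reversible. Your writeup is somewhat more explicit than the paper's --- spelling out the commutation relations $(\sigma_\xi r)(D)=r(\xi D)$, $D^\alpha\sigma_\xi=\xi^\alpha\sigma_\xi D^\alpha$, the identity $\sigma_\xi\cD(h)=\cD(\sigma_\xi h)$ for $\xi\in\CC_*^s$, and the explicit inversion $q_\xi=L^{-1}(\sigma_{\xi^{-1}}g_\xi)$ in the converse --- and it also separates the claim ``is an ideal'' (which only needs $\cQ_\xi$ to be $D$--invariant) from ``is of principal multiplicity'' (which uses invertibility of $\sigma_\xi$), a distinction the paper glosses over; but the skeleton and the lemmas used are the same.
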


\begin{proof}
  For $\xi \in \Xi$ we set $\cQ_\xi' = \cD ( L q_\xi )$ which equals
  $L \cS (q_\xi)$ by Proposition~\ref{P:LSDL}. Then, also
  $$
  \cQ_\xi := \left\{ q ( \diag \, \xi \; \cdot ) : q \in \cQ_\xi' \right\}
  $$
  is a $D$--invariant space generated by $L q_\xi ( \diag \, \xi \; \cdot
  )$, and by Theorem~\ref{T:thetaOpDiffOp} it follows that
  \begin{equation}
    \label{eq:DhatCondhat}
    q(\hat D) p(\xi) = 0, \quad q \in \cS ( q_\xi )  
  \end{equation}
  if and only if
  \begin{equation}
    \label{eq:DhatCondNorm}
    q(D) p(\xi) = 0, \quad q \in \cQ_\xi = \cD \left( L q_\xi ( \diag \,
      \xi \; \cdot) \right).    
  \end{equation}
  This proves the first claim, the second one follows from the
  observation that the process is reversible provided that $\diag \, \xi$
  is invertible which happens if and only if $\xi \in \CC_*^s$.
  \qed
\end{proof}

\noindent
The equivalence of (\ref{eq:DhatCondhat}) and (\ref{eq:DhatCondNorm})
shows that Hermite interpolations can equivalently formulated either
in terms of regular differential operators and differentiation
invariant spaces or in terms of $\theta$--operators and shift
invariant spaces.

The \emph{Hermite interpolation problem} based on $\Xi$ and
polynomials $q_\xi$ can now be phrased as follows: given $g \in \Pi$
find a polynomial $p$ (in some prescribed space) such that
\begin{equation}
  \label{eq:HermitInterpolDef}
  q (\hat D) p (\xi) = q (\hat D) g (\xi), \qquad q \in \cS( q_\xi ),
  \quad \xi \in \Xi.  
\end{equation}
Clearly, the number of interpolation conditions for this problem is
the \emph{total multiplicity}
$$
N = \sum_{\xi \in \Xi} \dim \cS (q_\xi).
$$
The name ``multiplicity'' ist justified here since $\dim \cQ_\xi$ is
the scalar multiplicity of a common zero of a set of polynomials and
$N$ counts the total multiplicity. Note however, that this information
is incomplete since problems with the same $N$ can nevertheless be
structurally different.

\begin{example}\label{Ex:Count}
  Consider $q_\xi (x) = x_1 x_2$ and $q_\xi (x) = x_1^3$. In both
  cases $\dim \cS (q_\xi) = 4$ although, of course, the spaces $\Span
  \{ 1,x_1,x_2,x_1 x_2 \}$ and $\Span \{ 1,x_1,x_1^2,x_1^3 \}$
  do not coincide.
\end{example}

A subspace $\mathcal P \subset \Pi$ of polynomials is called an
\emph{interpolation space} if for any $g \in \Pi$ there exists $p \in
\cP$ such that (\ref{eq:HermitInterpolDef}) is satisfied. A subspace
$\cP$ is called a \emph{universal interpolation space} of order $N$ if this is
possible for \emph{any} choice of $\Xi$ and $q_\xi$ such that
$$
\sum_{\xi \in \Xi} \dim \cS (q_\xi) \le N.
$$
Using the definition
$$
\Upsilon_n := \left\{ \alpha \in \NN_0^s : \prod_{j=1}^s ( 1+\alpha_j
  ) \le n \right\}, \qquad n \in \NN,
$$
of the \emph{first hyperbolic orthant}, the positive part of the
\emph{hyperbolic cross}, we can give the following statement that
also tells us that the Hermite interpolation problem is always
solvable.

\begin{theorem}\label{T:UniversalSapce}
  $\Pi_{\Upsilon_N}$ is a universal interpolation space for the
  interpolation problem (\ref{eq:HermitInterpolDef}).
\end{theorem}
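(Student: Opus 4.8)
The idea is to recast the claim about $\Pi_{\Upsilon_N}$ as a statement about the ideal cut out by the interpolation conditions, and then to use that the complement of an initial ideal is a \emph{lower set} of controlled size --- which is exactly what $\Upsilon_N$ is built to accommodate.

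First I would observe that a subspace $\cP \subseteq \Pi$ is an interpolation space for (\ref{eq:HermitInterpolDef}) if and only if $\cP + I = \Pi$, where
$$
I := \left\{ p \in \Pi : q(\hat D)p(\xi) = 0, \ q \in \cS(q_\xi), \ \xi \in \Xi \right\} .
$$
Indeed, since $p \mapsto q(\hat D)p(\xi)$ is linear, the solutions of (\ref{eq:HermitInterpolDef}) for a given $g$ form precisely the coset $g+I$, so solvability in $\cP$ for \emph{every} $g \in \Pi$ is equivalent to $\Pi = \cP + I$. By Theorem~\ref{T:DhatIdeal} the set $I$ is an \emph{ideal}, and being the common kernel of the at most
$$
N' := \sum_{\xi \in \Xi} \dim \cS(q_\xi) \ \le\ N
$$
linear functionals $p \mapsto q(\hat D)p(\xi)$ (with $q$ running through bases of the $\cS(q_\xi)$), it satisfies $\dim \Pi/I = \mathrm{codim}\, I \le N' \le N$; in particular $\Pi/I$ is finite dimensional. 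It therefore remains to show that the residues of the monomials $(\cdot)^\alpha$, $\alpha \in \Upsilon_N$, span $\Pi/I$, i.e. that $\Pi_{\Upsilon_N} + I = \Pi$.

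Here I would invoke a classical fact (e.g. from Gr\"obner basis theory): after fixing any term order, the residues of the standard monomials $(\cdot)^\alpha$ with $\alpha \in B := \{\alpha \in \NN_0^s : (\cdot)^\alpha \notin \mathrm{in}_{<}(I)\}$ form a basis of $\Pi/I$, and $B$ is a lower set ($\beta \le \alpha \in B$ implies $\beta \in B$) with $\# B = \dim \Pi/I \le N$. The decisive step is then the elementary remark that \emph{every} lower set $B$ with $\# B \le N$ is contained in $\Upsilon_N$: if $\alpha \in B$, the whole ``box'' $\{\beta \in \NN_0^s : \beta \le \alpha\}$ lies in $B$, hence $\prod_{j=1}^s (1+\alpha_j) = \#\{\beta : \beta \le \alpha\} \le \# B \le N$, which is exactly the defining inequality for membership in $\Upsilon_N$. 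Consequently $\{(\cdot)^\alpha : \alpha \in \Upsilon_N\}$ already contains the basis $\{(\cdot)^\alpha : \alpha \in B\}$ of $\Pi/I$, so $\Pi_{\Upsilon_N} + I = \Pi$. Since $\Xi$ and the $q_\xi$ were an arbitrary configuration with $N' \le N$, this shows that $\Pi_{\Upsilon_N}$ is a universal interpolation space of order $N$, and writing $g = p + h$ with $p \in \Pi_{\Upsilon_N}$ and $h \in I$ shows in passing that the Hermite interpolation problem is always solvable.

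The conceptual content is entirely in the last observation, which says that $\Upsilon_N$ is precisely the union of all lower sets of cardinality at most $N$, and so meets every possible initial-ideal complement of codimension $\le N$. After this reformulation there is no real obstacle: the bound $\mathrm{codim}\, I \le N'$ is immediate from counting the linear conditions, and the only structural input is Theorem~\ref{T:DhatIdeal}, which is exactly what guarantees that the solution set $I$ is an ideal and hence that the ``staircase'' argument applies (in particular no assumption $\xi \in \CC_*^s$ is needed for this direction --- for $\xi$ with vanishing coordinates the $\theta$-operator conditions merely collapse, which can only lower the codimension).
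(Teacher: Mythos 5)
Your proof is correct and follows essentially the same path as the paper's: pass to the ideal $I$ cut out by the interpolation conditions (using Theorem~\ref{T:DhatIdeal}), take a lower-set monomial basis of $\Pi/I$ via Gr\"obner bases, and observe that any lower set of cardinality at most $N$ must lie inside $\Upsilon_N$. The only difference is that you spell out the ``box'' counting argument $\prod_j(1+\alpha_j) = \#\{\beta \le \alpha\} \le \# B \le N$, which the paper states without proof as ``$\Upsilon_N$ is the union of all lower sets of cardinality $\le N$''.
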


\begin{proof}
  Since the interpolant to (\ref{eq:HermitInterpolDef}) is an ideal
  projector by Theorem~\ref{T:DhatIdeal}, its kernel, the set of all
  homogeneous solutions to (\ref{eq:HermitInterpolDef}), forms a zero
  dimensional ideal in $\Pi$. This ideal has a Gr\"obner basis, for
  example with respect to the graded lexicographical ordering, cf.
  \cite{CoxLittleOShea92}, and the remainders of division by this
  basis form the space $\Pi_A$ for some lower set $A \subset \NN_0^s$
  of cardinality $N$. Since
  $\Upsilon_N$ is the union of all lower sets of cardinality $\le N$,
  it contains $\Pi_A$ and therefore $\Pi_{\Upsilon_N}$ is a universal
  interpolation space.\qed
\end{proof}

\section{Application to the generalized Prony problem}
\label{sec:Prony}

We now use the tools of the preceding sections to investigate the
structure of the generalized Prony problem~(\ref{eq:exppolyDef}) and
to show how to
reconstruct $\Omega$ and the polynomials $f_\omega$ from integer
samples. As in \cite{Sauer15S,Sauer2016A} we start by
considering for $A,B \subset \NN_0^s$ the \emph{Hankel matrix}
\begin{equation}
  \label{eq:FABDef}
  F_{A,B} = \left[ f(\alpha+\beta) :
    \begin{array}{c}
      \alpha \in A \\ \beta \in B
    \end{array}
  \right]
\end{equation}
of samples.

\begin{remark}
  Instead of the Hankel matrix $F_{A,B}$ one might also consider the
  \emph{Toeplitz matrix}
  \begin{equation}
    \label{eq:ToeplitzMat}
    T_{A,B} = \left[ f(\alpha-\beta) :
      \begin{array}{c}
        \alpha \in A \\ \beta \in B
      \end{array}
    \right], \qquad A,B \subset \NN_0^s,  
  \end{equation}
  which would lead to essentially the same results. The main
  difference is the set on which $f$ is sampled, especially if $A,B$
  are chosen as the total degree index sets $\Gamma_n := \{ \alpha \in
  \NN_0^s : |\alpha| \le n \}$ for some $n \in \NN$.
\end{remark}

\begin{remark}
  For a coefficient vector $p = \left( p_\alpha : \alpha \in A \right)
  \in \CC^B$, the result of $F_{A,B} p$ is exactly the restriction of
  the convolution $a * f$ from (\ref{eq:ConvolDef}) with $a (-\alpha)
  = p(\alpha)$, $\alpha \in A$. With the Toeplitz matrix we get the
  even more direct $T_{A,B} p = (f * p) (A)$.
\end{remark}

\noindent
Given a finite set $\Theta \subset \Pi'$ of linearly independent
linear functionals on $\Pi$ and $A \subset \NN_0^s$ the monomial
\emph{Vandermonde matrix} for the interpolation problem at $\Theta$ is
defined as
\begin{equation}
  \label{eq:VandermondeDef}
  V( \Theta, A ) := \left[ \theta (\cdot)^\alpha :
    \begin{array}{c}
      \theta \in \Theta \\ \alpha \in A
    \end{array}
  \right].
\end{equation}
It is standard linear algebra to show that the interpolation problem
\begin{equation}
  \label{eq:ThetaInterpol}
  \Theta p = y, \quad y \in \CC^\Theta, \qquad \mbox{i.e.} \qquad
  \theta p = y_\theta, \quad \theta \in \Theta,
\end{equation}
has a solution for any data $y = \CC^\Theta$ iff $\rank V( \Theta,A )
\ge \# \Theta$ and that the solution is unique iff $V( \Theta,A )$ is
a nonsingular, hence square, matrix.

For our particular application, we choose $\Theta$ in the following way:
Let $Q_\omega$ be a basis for for the space $\cS ( f_\omega )$  and
set
$$
\Theta_\Omega := \bigcup_{\omega \in \Omega} \{ \theta_\omega
\, q(\hat D) : q \in Q_\omega \}, \qquad \theta_\omega p := p(
e^\omega).
$$
Since, for any $\omega \in \Omega$,
$$
\left( \Delta^\alpha f_\omega : |\alpha| = \deg f_\omega \right)
$$
is a nonzero vector of complex numbers or constant polynomials, we
know that $1 \in \cS ( f_\omega )$ and will therefore always make the
assumption that $1 \in Q_\omega$, $\omega \in \Omega$, which
corresponds to $\theta_\omega \in \Theta_\Omega$, $\omega \in
\Omega$. Moreover, we request without loss of generality that
$f_\omega \in Q_\omega$.

We pattern the Vandermonde matrix conveniently as
$$
V ( \Theta_\Omega, A ) = \left[ \left( q (\hat D)
    (\cdot)^\alpha \right) (e^\omega) :
  \begin{array}{c}
    q \in Q_\omega, \, \omega \in \Omega \\
    \alpha \in A
  \end{array}
\right]
$$
to obtain the following fundamental factorization of the Hankel matrix.

\begin{theorem}\label{T:HankelFact}
  The Hankel matrix $F_{A,B}$ can be factored into
  \begin{equation}
    \label{eq:HankelFact}
    F_{A,B} = V ( \Theta_\Omega, A )^T \, F \, V ( \Theta_\Omega, B ),
  \end{equation}
  where $F$ is a nonsingular block diagonal matrix independent of $A$
  and $B$.
\end{theorem}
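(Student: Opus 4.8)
The goal is to establish the factorization $F_{A,B} = V(\Theta_\Omega,A)^T \, F \, V(\Theta_\Omega,B)$. The natural strategy is to compute a single entry $f(\alpha+\beta)$ of the Hankel matrix, substitute the defining representation $f(\alpha+\beta) = \sum_{\omega\in\Omega} f_\omega(\alpha+\beta)\,e^{\omega^T(\alpha+\beta)}$, and rearrange it into the bilinear form $\sum_{\omega} (\text{row in }\alpha) \cdot F_\omega \cdot (\text{column in }\beta)$. The key mechanism that makes the frequency-$\omega$ summand separate into a product of an $\alpha$-part and a $\beta$-part is the shift-invariance of $\cS(f_\omega)$: expanding $f_\omega(\alpha+\beta)$ via the Newton/Taylor machinery of Section~\ref{sec:Stirling} expresses the shifted polynomial in terms of the basis $Q_\omega$ of $\cS(f_\omega)$, and this is exactly where the Stirling-number identities and the operator $L$ enter.

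First I would fix $\omega$ and unwind $e^{\omega^T(\alpha+\beta)} = e^{\omega^T\alpha} e^{\omega^T\beta}$, noting $e^{\omega^T\alpha} = (e^\omega)^\alpha$ so that the exponential part already factors and matches the monomial columns $(\cdot)^\alpha$ evaluated at $e^\omega$. Next I would treat the polynomial factor $f_\omega(\alpha+\beta)$: since $\cS(f_\omega)$ is shift invariant with basis $Q_\omega$, one can write $f_\omega(\,\cdot+\beta\,) = \sum_{q\in Q_\omega} c_q^\omega(\beta)\, q$ for suitable coefficient functions $c_q^\omega$, and evaluating at $\alpha$ gives $f_\omega(\alpha+\beta) = \sum_{q\in Q_\omega} c_q^\omega(\beta)\, q(\alpha)$. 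The bridge to the $\hat D$-operators in the definition of $\Theta_\Omega$ is Theorem~\ref{T:thetaOpDiffOp}: the quantity $\big(q(\hat D)(\cdot)^\alpha\big)(e^\omega)$ appearing in the patterned Vandermonde matrix is, up to the invertible diagonal scaling $\diag e^\omega$, exactly a linear functional that reproduces $q(\alpha)$ weighted by $(e^\omega)^\alpha$. Combining these, the $\omega$-block of the product expands as
\[
\sum_{q,q'\in Q_\omega} \big(q(\hat D)(\cdot)^\alpha\big)(e^\omega)\; (F_\omega)_{q,q'}\; \big(q'(\hat D)(\cdot)^\beta\big)(e^\omega),
\]
and the task reduces to identifying the middle matrix $F_\omega$ and checking it is independent of $A,B$ and nonsingular; $F$ is then the block diagonal assembly of the $F_\omega$.

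To pin down $F_\omega$ and its nonsingularity, I would argue as follows. The coefficient functions $c_q^\omega$ are themselves polynomials lying in a space determined by $f_\omega$ (one can see this from the Newton expansion: differences/derivatives of $f_\omega$ again lie in $\cS(f_\omega)$), so expanding each $c_q^\omega(\beta)$ back in the dual basis corresponding to $\{q'(\hat D)(\cdot)^\beta)(e^\omega)\}$ produces constant coefficients $(F_\omega)_{q,q'}$ that encode the pairing between the basis $Q_\omega$ and its expansion coefficients — essentially the Gram-type matrix of the shift action of $f_\omega$ on $\cS(f_\omega)$ in the basis $Q_\omega$. This matrix is built purely from $f_\omega$ and $e^\omega$, hence independent of $A$ and $B$. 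Nonsingularity follows because the map $\beta \mapsto f_\omega(\cdot+\beta)$ has image all of $\cS(f_\omega)$ (that is what ``span'' in the definition of $\cS$ gives, together with the fact that integer shifts suffice by the Corollary after Proposition~\ref{P:LShiftD}), so the change-of-basis it induces is invertible; in particular our normalizing assumptions $1\in Q_\omega$ and $f_\omega\in Q_\omega$ guarantee the relevant submatrix is square and full rank. The main obstacle is the bookkeeping in this last step: making precise that the expansion coefficients $c_q^\omega(\beta)$ are, via Theorem~\ref{T:thetaOpDiffOp} and the duality between $\cS(f_\omega)$ and $\cD(Lf_\omega)$ from Proposition~\ref{P:LSDL}, linear combinations of exactly the functionals $q'(\hat D)(\cdot)^\beta)(e^\omega)$ with $q'\in Q_\omega$ — i.e. that no functionals outside the chosen $\Theta_\Omega$ are needed — and that the resulting constant matrix $F_\omega$ is symmetric-positive in the sense needed for nonsingularity. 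Once that alignment is verified, the factorization (\ref{eq:HankelFact}) and the stated properties of $F$ follow by assembling the blocks over $\omega\in\Omega$.
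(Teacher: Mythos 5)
Your proposal follows essentially the same route as the paper: both expand $f_\omega(\alpha+\beta)$ in the basis $Q_\omega$ of the shift-invariant space $\cS(f_\omega)$, observe that the coefficients are again polynomials in the shift parameter lying in the same space, and thereby arrive at Gr\"obner's linearization formula $f_\omega(x+y)=\sum_{q,q'\in Q_\omega} a_{q,q'}(f_\omega)\,q(x)\,q'(y)$, after which the elementary identity $\bigl(q(\hat D)(\cdot)^\alpha\bigr)(e^\omega)=q(\alpha)\,e^{\omega^T\alpha}$ (the eigenfunction property of $\hat D$, not actually Theorem~\ref{T:thetaOpDiffOp}, which instead relates $\hat D$ to $D$ via $L$) yields the bilinear Vandermonde form with block-diagonal middle matrix. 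Two small inaccuracies worth noting: the evaluation $(q(\hat D)(\cdot)^\alpha)(e^\omega)$ equals $q(\alpha)(e^\omega)^\alpha$ exactly, not ``up to a diagonal scaling''; and appealing to some ``symmetric-positive'' structure for nonsingularity is a red herring --- the matrix $A_\omega$ is symmetric by $f_\omega(x+y)=f_\omega(y+x)$, but it has complex entries and positivity is neither asserted nor needed. Your underlying intuition for nonsingularity (integer shifts of $f_\omega$ span $\cS(f_\omega)$) is the right one, but the argument is left at the level of a heuristic; the paper turns it into a proof by explicitly solving $A_\omega\bigl(\sum_\alpha c_q(\alpha)\,Q_\omega(\alpha)\bigr)=e_q$ using the expansion $q=\sum_\alpha c_q(\alpha)\,f_\omega(\cdot+\alpha)$, thereby constructing the inverse column by column, and you would need to supply this (or an equivalent) step to close the argument.
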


\begin{proof}
  We begin with an idea by Gr\"obner
  \cite{groebner37:_ueber_macaul_system_bedeut_theor_differ_koeff},
  see also \cite{Sauer16:_kernels}, and first
  note that any $g \in Q_\omega$ can be written as
  $$
  g(x+y) = \sum_{q \in Q_\omega} c_q (y) \, q(x), \qquad c_q : \CC^s
  \to \CC.
  $$
  Since $g(x+y)$ also belongs to $\Span Q_\omega$ as a function in
  $y$ for fixed $x$, we conclude that $c_q (y)$ can also be written in
  terms of $Q_\omega$ and thus have obtained the \emph{linearization formula}
  \begin{equation}
    \label{eq:LinearizationForm}
    g (x+y) = \sum_{q,q' \in Q_\omega} a_{q,q'} (g) \, q(x) \, q' (y),
    \qquad a_{q,q'} (g) \in \CC,
  \end{equation}
  from
  \cite{groebner37:_ueber_macaul_system_bedeut_theor_differ_koeff}.
  Now consider
  \begin{eqnarray*}
    \lefteqn{ (F_{A,B})_{\alpha,\beta}
      = f(\alpha+\beta) = \sum_{\omega \in \Omega} f_\omega
      (\alpha+\beta) \, e^{\omega^T (\alpha + \beta)} } \\
    & = & \sum_{\omega \in \Omega} \sum_{q,q' \in Q_\omega} a_{q,q'} (
    f_\omega ) \, q(\alpha) \, e^{\omega^T \alpha} \, q'(\beta) \,
    e^{\omega^T \beta} \\
    & = & \sum_{\omega \in \Omega} \sum_{q,q' \in Q_\omega} a_{q,q'} (
    f_\omega ) \, \left( q(\hat D) (\cdot)^\alpha \right) (e^\omega)
    \, \left( q'(\hat D) (\cdot)^\beta \right) (e^\omega)  \\
    & = & \left( V(\Theta_\Omega,A) \, \diag \left( \left[ a_{q,q'} (
          f_\omega ) :
          \begin{array}{c}
            q \in Q_\omega \\ q' \in Q_\omega 
          \end{array}
        \right] : \omega \in \Omega \right) \, V(\Theta_\Omega,B)^T
    \right)_{\alpha,\beta},
  \end{eqnarray*}
  which already yields (\ref{eq:HankelFact}) with
  $$
  F := \diag \left( \left[ a_{q,q'} (
      f_\omega ) :
      \begin{array}{c}
        q \in Q_\omega \\ q' \in Q_\omega 
      \end{array}
    \right] : \omega \in \Omega \right) = \diag \left( A_\omega :
    \omega \in \Omega \right).
  $$
  It remains to prove that the blocks $A_\omega$
  of the block diagonal matrix $F$ are nonsingular. To that end, we recall that
  $f_\omega \in Q_\omega$, hence, by (\ref{eq:LinearizationForm}),
  $$
  f_\omega = f_\omega (\cdot + 0) = \sum_{q,q' \in Q_\omega} a_{q,q'}
  (f_\omega) \, q(x) \, q'(0),
  $$
  that is, by linear independence of the elements of $Q_\omega$,
  $$
  \sum_{q' \in Q_\omega} a_{q,q'} (f_\omega) \, q'(0) =
  \delta_{q,f_\omega}, \qquad q \in Q_\omega,
  $$
  which can be written as $A_\omega Q_\omega (0) = e_{f_\omega}$ where
  $Q_\omega$ also stands for the polynomial vector formed by the basis
  elements. Since $Q_\omega$ is a basis for $\cS( f_\omega )$, there
  exist finitely supported sequences $c_q : \NN_0^s \to \CC$, $q \in
  Q_\omega$, such that
  $$
  q = \sum_{\alpha \in \NN_0^s} c_q (\alpha) \, f( \cdot + \alpha),
  = \sum_{q',q'' \in Q_\omega} a_{q',q''} (f_\omega) \left(
    \sum_{\alpha \in \NN_0^s} c_q (\alpha) \, q'' (\alpha) \right) \, q'
  $$
  from which a comparison of coefficients allows us to conclude that
  $$
  A_\omega \, \sum_{\alpha \in \NN_0^s} c_q (\alpha) Q_\omega (\alpha)
  = e_q, \qquad q \in Q_\omega,
  $$
  which even gives an ``explicit'' formula for the columns of
  $A_\omega^{-1}$. \qed
\end{proof}

\begin{remark}
  A similar factorization of the Hankel matrix in terms of Vandermonde
  matrices for slightly different but equivalent Hermite problems
  has also been given in
  \cite[Proposition~3.18]{Mourrain16P}.
  However, the invertibility of the ``inner matrix'' $F$ was concluded
  there from the invertibility of the Hankel matrix and the assumption
  that $\Pi_A$ must be an \emph{interpolation space}, giving unique
  interpolants for the Hermite problem.
  Theorem~\ref{T:HankelFact}, on
  the other hand, does not need these assumptions, shows that $F$ is
  \emph{always} nonsingular and therefore extends
  the one given in \cite[(5)]{Sauer2016A} in a natural way.
\end{remark}

\begin{remark}
  If $F_{A,B}$ is replaced by the Toeplitz matrix from
  (\ref{eq:ToeplitzMat}), then the factorization becomes
   \begin{equation}
    \label{eq:HankelFact}
    T_{A,B} = W ( \Theta_\Omega, A ) \, F \, W ( \Theta_\Omega, B )^*,
    \qquad W ( \Theta_\Omega, A ) := V ( \Theta_\Omega, A )^T
  \end{equation}
  which has more similarity to a block Schur decomposition since now a
  Hermitian of the factorizing matrix appears.
\end{remark}

\noindent
Once the factorization (\ref{eq:HankelFact}) is established, the
results from \cite{Sauer15S,Sauer2016A} can be applied
literally and extend to the case of exponential polynomial
reconstruction directly. In particular, the following observation is
relevant for the termination of the algorithms. It says that if the
row index set $A$ is ``sufficiently rich'', then the full information
about the ideal $I_\Omega := \ker \Theta_\Omega$ can be extracted from
the Hankel matrix $F_{A,B}$.

\begin{theorem}\label{T:HilbFun}
  If $\Pi_A$ is an interpolation space for $\Theta_\Omega$, for
  example if $A = \Upsilon_N$, then
  \begin{enumerate}
  \item\label{it:THilbFun1} the function $f$ can be reconstructed from
    samples $f(A+B)$, 
    $A,B \subset \NN_0^s$, if and only if $\Pi_A$ and $\Pi_B$ are
    interpolation spaces for $\Theta_\Omega$.
  \item\label{it:THilbFun2} a vector $p \in \CC^B \setminus \{ 0 \}$ satisfies
    $$
    F_{A,B} p = 0 \qquad \Leftrightarrow \qquad
    \sum_{\beta \in B} p_\beta \, (\cdot)^\beta \in I_\Omega \cap
    \Pi_B.
    $$
  \item\label{it:THilbFun3} the mapping $n \mapsto \rank F_{A,\Gamma_n}$
    is the affine Hilbert function for the ideal $I_\Omega$.
  \end{enumerate}
\end{theorem}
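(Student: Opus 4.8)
The plan is to exploit the factorization~(\ref{eq:HankelFact}) from Theorem~\ref{T:HankelFact}, which reduces every assertion to a statement about the ranks and kernels of the Vandermonde matrices $V(\Theta_\Omega, A)$ and $V(\Theta_\Omega, B)$, since the inner block $F$ is invertible. The key linear-algebra fact to keep in mind throughout is the one recorded just before Theorem~\ref{T:HankelFact}: $\Pi_A$ is an interpolation space for $\Theta_\Omega$ (i.e., (\ref{eq:ThetaInterpol}) is solvable for all data) exactly when $\rank V(\Theta_\Omega, A) \ge \#\Theta_\Omega = N$, and since $V(\Theta_\Omega,A)$ has $N$ rows this means $V(\Theta_\Omega,A)$ has full row rank $N$; dually, its transpose $V(\Theta_\Omega,A)^T$ then has trivial \emph{null space}. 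That the choice $A=\Upsilon_N$ works is Theorem~\ref{T:UniversalSapce}.

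For part~\ref{it:THilbFun2}, I would argue as follows. Since $F$ is nonsingular and, by hypothesis, $V(\Theta_\Omega,A)^T$ has trivial kernel, we get from~(\ref{eq:HankelFact}) that $F_{A,B}p = 0$ if and only if $V(\Theta_\Omega,B)p = 0$. Now unravel what $V(\Theta_\Omega,B)p=0$ means: writing $p = \sum_{\beta\in B} p_\beta (\cdot)^\beta \in \Pi_B$, the $\theta$-th entry of $V(\Theta_\Omega,B)p$ is $\bigl(q(\hat D) p\bigr)(e^\omega)$ as $q$ ranges over $Q_\omega$ and $\omega$ over $\Omega$. So $V(\Theta_\Omega,B)p = 0$ says precisely that $\theta p = 0$ for all $\theta \in \Theta_\Omega$, i.e., $p \in \ker\Theta_\Omega = I_\Omega$; combined with $p\in\Pi_B$ this is $p \in I_\Omega\cap\Pi_B$, which is the claim. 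One should note for later use that this establishes $\dim(I_\Omega\cap\Pi_B) = \#B - \rank F_{A,B}$ whenever $\Pi_A$ is an interpolation space, hence $\rank F_{A,B} = \dim\Pi_B - \dim(I_\Omega\cap\Pi_B) = \dim(\Pi_B/(I_\Omega\cap\Pi_B))$.

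Part~\ref{it:THilbFun3} is then immediate from that last formula with $B = \Gamma_n$: the affine Hilbert function of $I_\Omega$ is by definition $n \mapsto \dim(\Pi_n/(I_\Omega\cap\Pi_n)) = \dim\Pi_{\Gamma_n} - \dim(I_\Omega\cap\Pi_{\Gamma_n})$, which equals $\rank F_{A,\Gamma_n}$ by the computation above (using that $\Pi_A$ is an interpolation space, the standing assumption of the theorem). For part~\ref{it:THilbFun1}, the ``if'' direction is essentially the content of the earlier sections: if $\Pi_A$ and $\Pi_B$ are both interpolation spaces, then $V(\Theta_\Omega,A)^T$ and $V(\Theta_\Omega,B)$ both have full rank $N$, so $F_{A,B} = V(\Theta_\Omega,A)^T F V(\Theta_\Omega,B)$ has rank $N$; the reconstruction algorithm from \cite{Sauer15S,Sauer2016A} then extracts an ideal basis and multiplication tables for $I_\Omega$ from $F_{A,B}$ — by part~\ref{it:THilbFun2} the kernel of $F_{A,B}$ gives generators of $I_\Omega\cap\Pi_B$, which generate $I_\Omega$ once $B$ is large enough — and from $I_\Omega$ one recovers $\Omega$ as the common zeros and the multiplicity data, from which the $f_\omega$ follow by solving the (now square, invertible) Vandermonde system against the sample values. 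For ``only if'', I would argue contrapositively: if, say, $\Pi_B$ is not an interpolation space, then $V(\Theta_\Omega,B)$ has rank $<N$, so $F_{A,B}$ has rank $<N$ and its column space cannot see all of $\Theta_\Omega$; concretely there is a nonzero functional in $\Span\Theta_\Omega$ annihilating every column, meaning the data $f(A+B)$ is consistent with a strictly larger class of exponential polynomials and $f$ is not determined. I expect the main obstacle to be making this last ``only if'' direction airtight: one has to argue that failure of the rank condition genuinely produces two distinct exponential polynomials with the same samples on $A+B$, which requires checking that the extra degrees of freedom in the kernel correspond to an admissible perturbation of the pair $(\Omega, (f_\omega))$ rather than being killed by the constraint that $f$ have the prescribed sparse form; invoking the universality of $\Pi_{\Upsilon_N}$ and the fact that $I_\Omega$ is a genuine zero-dimensional ideal with the prescribed Hilbert function should close this gap.
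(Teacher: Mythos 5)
Your arguments for parts~\ref{it:THilbFun2} and~\ref{it:THilbFun3} are exactly the paper's own proof, just written out in more detail: invoke the factorization from Theorem~\ref{T:HankelFact}, note that $V(\Theta_\Omega,A)^T$ is injective because $\Pi_A$ is an interpolation space and $F$ is invertible, identify $\ker F_{A,B}$ with $I_\Omega\cap\Pi_B$, and then apply rank--nullity with $B=\Gamma_n$. For part~\ref{it:THilbFun1} the paper offers no argument at all; it states that the proof is ``a literal copy'' of that of \cite[Theorem~3]{Sauer2016A}, so there is nothing in this paper to compare against.

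You are right to flag the ``only if'' direction of part~\ref{it:THilbFun1} as the substantive issue, and the gap you acknowledge is genuine. From $\rank V(\Theta_\Omega,B)<N$, and hence the existence of some nonzero $\theta'\in\Span\Theta_\Omega$ vanishing on all columns of $V(\Theta_\Omega,B)$, it does not follow \emph{automatically} that there is a second admissible exponential polynomial of total multiplicity $\le N$ agreeing with $f$ on $A+B$; one must actually construct such an $f'$, i.e., exhibit a perturbed frequency/coefficient pair $(\Omega',(f'_{\omega}))$ whose associated Vandermonde--Hankel factorization reproduces the same samples on $A+B$. Appealing to the universality of $\Pi_{\Upsilon_N}$ and to the Hilbert function gestures at the right ingredients but does not by itself carry out that construction. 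That is precisely the work the cited Theorem~3 in \cite{Sauer2016A} performs, so your proof coincides with the paper's wherever the paper actually proves something, and the one hole you leave open is exactly the piece the paper outsources by reference.
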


\begin{proof}
  Theorem~\ref{T:HilbFun} is a direct consequence of
  Theorem~\ref{T:HankelFact} by means of elementary linear
  algebra. The proof of \ref{it:THilbFun1}) is a literal copy of that
  of \cite[Theorem~3]{Sauer2016A} for \ref{it:THilbFun2}) we note
  that, for $p \in \CC^B$,
  \begin{eqnarray*}
    F_{A,B} p & = & V ( \Theta_\Omega, A )^T \, F \, V ( \Theta_\Omega,
                    B ) p \\
    & = & V ( \Theta_\Omega, A )^T \, F \, \left[ \left( q (\hat D) p
    \right) (e^\omega) : q \in Q_\omega, \, \omega \in \Omega \right].    
  \end{eqnarray*}
  By assumption, $V ( \Theta_\Omega, A )^T$ has full rank, $F$ is
  invertible by Theorem~\ref{T:HankelFact}, and therefore $F_{A,B} p =
  0$ if and only if the polynomial $p$ belongs to $I_\Omega$. Finally,
  \ref{it:THilbFun3}) is an immediate consequence of~\ref{it:THilbFun2}).
\end{proof}

\noindent
Theorem~\ref{T:HilbFun} suggests the following generic algorithm: use
a nested sequence $B_0 \subset B_1 \subset B_2 \subset \cdots$ of
index sets in $\NN_0^s$ such that there exist $j(n) \in \NN$, $n \in
\NN$, such that $B_{j(n)} = \Gamma_n$. In other words: the subsets
progress in a \emph{graded} fashion. Then, for $j=0,1,\dots$
\begin{enumerate}
\item Consider the kernel of $F_{\Upsilon_N,B_j}$, these are the ideal
  elements in $\Pi_{B_j}$.
\item Consider the complement of the kernel, these are elements of the
  \emph{normal set} and eventually form a basis for an interpolation
  space.
\item Terminate if $\rank F_{\Upsilon_N,B_{j(n+1)}} = \rank
  F_{\Upsilon_N,B_{j(n)}}$ for some $n$.
\end{enumerate}
Observe that this task of computing an ideal basis from nullspaces of
matrices is \emph{exactly} the same as in Prony's problem with
constant coefficients. The difference lies only in the fact that now
the ideal is not \emph{radical} any more, but this is obviously
irrelevant for Theorem~\ref{T:HilbFun}.

Two concrete instances of this approach were presented and discussed
earlier: \cite{Sauer15S} uses $B_j = \Gamma_j$ and Sparse Homogeneous
Interpolation Techniques (DNSIN) to compute an orthonormal H--basis
and a graded basis for the ideal and the normal space,
respectively. Since these computations are based on \emph{orthogonal
  decompositions}, mainly $QR$ factorizations, it is numerically
stable and suitable 
for finite precision computations in a floating point environment.
A symbolic approach where the $B_j$ are generated by adding
multiindices according to a graded term order, thus using Sparse
Monomial Interpolation with Least Elements (SMILE), was introduced in
\cite{Sauer2016A}. This method is more efficient in terms of number of
computations and therefore suitable for a symbolic framework with exact
rational arithmetic.

\begin{remark}
  The only a priori knowledge these algorithms need to know is an
  upper estimate for the \emph{multiplicity} $N$.
\end{remark}

\noindent
It should be mentioned that also \cite{Mourrain16P} gives algorithms
to reconstruct frequencies and coefficients by first determining the
\emph{Prony ideal} $I_\Omega$; the way how these algorithms work and
how they are derived are different, however. It would be worthwhile to
study and understand the differences between and the advantages of the
methods.

While we will point out in the next section how the frequencies can be
determined by generalized eigenvalue methods, we still need to clarify
how the coefficients of the polynomials $f_\omega$ can be computed
once the ideal structure and the frequencies are determined. To that
end, we write
$$
f_\omega = \sum_{\alpha \in \NN_0^s} f_{\omega,\alpha} (\cdot)^\alpha
$$
and note that, with $\xi_\omega := e^\omega \in \CC_*^s$
$$
f(\beta) = \sum_{\omega \in \Omega} f_\omega (\beta) \, e^{\omega^T
  \beta}
= \sum_{\omega \in \Omega} \sum_{\alpha \in \NN_0^s} f_{\omega,\alpha}
\, \beta^\alpha \, \xi_\omega^\beta
= \sum_{\omega \in \Omega} \sum_{\alpha \in \NN_0^s} f_{\omega,\alpha}
\, \left( \hat D^\alpha (\cdot)^\beta \right) (\xi_\omega).
$$
In other words, we have for any choice of $A_\omega \subset \NN_0^s$,
$\omega \in \Omega$ and $B \subset \NN_0^s$ that
\begin{eqnarray*}
  f(B) & := & \left[ f(\beta) : \beta \in B \right] \\
  & = & \left[
    \left( \hat D^\alpha (\cdot)^\beta \right) (\xi_\omega) :
    \begin{array}{c}
      \beta \in B \\ \alpha \in A_\omega, \omega \in \Omega
    \end{array}
  \right] \left[
    f_{\omega,\alpha} : \alpha \in A_\omega, \omega \in \Omega
  \right]  \\
  & =: & G_{A,B} f_\Omega.
\end{eqnarray*}
The matrix $G_{A,B}$ is another Vandermonde matrix for a Hermite--type
interpolation problem with the functionals
\begin{equation}
  \label{eq:fCompHermProblem}
  \theta_\omega \hat D^\alpha, \qquad \alpha \in A_\omega, \, \omega \in \Omega.  
\end{equation}
The linear system
$$
G_{A,B} \, f_\Omega  = f(B)
$$
can thus be used to determine $f_\Omega$: first note that $L \Pi_n =
\Pi_n$ and therefore it follows by Theorem~\ref{T:DhatIdeal} that 
the interpolation problem is a Hermite problem, i.e., its kernel is an
ideal. If we set
$$
N = \sum_{\omega \in \Omega} { \deg f_\omega + s \choose s} - 1
$$
then, by Theorem~\ref{T:UniversalSapce}, the space $\Pi_{\Upsilon_N}$
is a universal interpolation space for the interpolation problem
(\ref{eq:fCompHermProblem}). Hence, with $A_\omega = \Gamma_{\deg
  f_\omega}$, the matrix $G_{A,\Upsilon_N}$ contains a nonsingular
square matrix of size $\# A \times \#A$ and the coefficient vector
$f_\Omega$ is the unique solution of the overdetermined interpolation
problem.

\begin{remark}
  The a priori information about the multiplicity $N$ of the
  interpolation points does not allow for an efficient reconstruction
  of the frequencies as it only says that there are at most $N$ points
  or points of local multiplicity up to $N$.
\end{remark}

\noindent
Nevertheless, the degrees $\deg f_\omega$, $\omega \in \Omega$, more
precisely, upper bounds for them, can be derived as a by-product of
the determination of the frequencies $\omega$ by means of
multiplication tables. To clarify this relationship, we briefly revise
the underlying theory, mostly due to M\"oller and Stetter
\cite{MoellerStetter95}, in the next section. 

\section{Multiplication tables and multiple zeros}
\label{sec:MultTab}

Having computed a good basis $H$ for the ideal $I_\Omega$ and a basis for
the normal set $\Pi / I_\Omega$, the final step consists of finding
the common zeros of $H$. The method of choice is still to use
eigenvalues of the multiplication tables,
cf. \cite{AuzingerStetter88,Stetter95}, but things become slightly
more intricate since we now have to consider the case of zeros with 
multiplicities, cf. \cite{MoellerStetter95}.

Let us briefly recall the setup in our particular case. The
multiplicity space at $\xi_\omega = e^\omega \in \CC_*^s$ is
$$
\cQ_\omega := \cD \left( L f_\omega ( \diag \, \xi_\omega \; \cdot ) \right)
$$
and since this is a $D$--invariant subspace, it has a graded basis
$Q_\omega$ where the highest degree element
in this basis can be chosen as $g_\omega := L f_\omega ( \diag \,
\xi_\omega \; \cdot)$. 
Since $\cQ_\omega = \cD (g_\omega)$, all other basis elements $q \in
Q_\omega$ can be written as $q = g_q (D) g_\omega$, $g_q \in \Pi$, $q
\in Q_\Omega$.

Given a basis $P$ of the normal set $\Pi / I_\Omega$ and a normal form operator
$\nu : \Pi \to \Pi / I_\Omega = \Span P$ modulo $I_\Omega$ (which is
an ideal projector and can be computed efficiently for Gr\"obner and
H--bases), the
multiplication $p \mapsto \nu \left( (\cdot)_j p \right)$ is a linear
operation on $\Pi / I_\Omega$ for any $j=1,\dots,s$. It can be 
represented with respect to the basis $P$ by means of a matrix $M_j$
which is called $j$th \emph{multiplication table} and gives the
multivariate generalization of the \emph{Frobenius companion matrix}.

Due to the unique solvability of the Hermite interpolation problem in
$\Pi / I_\Omega$, there exists a basis of fundamental polynomials
$\ell_{\omega,q}$, $q \in Q_\omega$, $\omega \in \Omega$, such that
\begin{equation}
  \label{eq:FundamentalDef}
  q'(D) \ell_{\omega,q} (\xi_{\omega'}) = \delta_{\omega,\omega'}
  \delta_{q,q'}, \qquad q' \in Q_{\omega'}, \quad \omega' \in \Omega.
\end{equation}
The projection to the normal set, i.e., the interpolant, can now be
written for any $p \in \Pi$  as
$$
L p = \sum_{\omega \in \Omega} \sum_{q \in Q_\omega} q(D) p
(\xi_\omega) \, \ell_{\omega,q}
$$
hence, by the Leibniz rule and the fact that $\cQ_\omega$ is $D$--invariant
\begin{eqnarray}
  \nonumber
  L \left( (\cdot)_j \, \ell_{\omega,q} \right)
  & = & \sum_{\omega' \in \Omega}
        \sum_{q' \in Q_{\omega'}} q'(D) \left( (\cdot)_j \,
          \ell_{\omega,q} \right)
        (\xi_\omega) \, \ell_{\omega',q'} \\
  \nonumber
  & = & \sum_{\omega' \in \Omega}
        \sum_{q' \in Q_{\omega'}} \left( (\cdot)_j \, q'(D) \ell_{\omega,q}
        (\xi_\omega) + \frac{\partial q'}{\partial x_j} (D) \ell_{\omega,q}
        (\xi_\omega) \right) \, \ell_{\omega',q'} \\
  \nonumber
  & = & (\cdot)_j \, \ell_{\omega,q} + \sum_{q' \in Q_\omega}
        \sum_{q'' \in Q_\omega} c_j (q',q'') \, q''(D) \ell_{\omega,q}
        (\xi_\omega) \, \ell_{\omega,q'} \\
  \label{eq:InvariantSpace}
  & = & (\cdot)_j \, \ell_{\omega,q} + \sum_{q' \in Q_\omega} c_j
        (q,q') \, \ell_{\omega,q'},
\end{eqnarray}
where the coefficients $c_j (q,q')$ are defined by the expansion
\begin{equation}
  \label{eq:DerivExpansion}
  \frac{\partial q}{\partial x_j} = \sum_{q' \in Q_\omega} c_j (q',q) \,
  q', \qquad q \in Q_\Omega.  
\end{equation}
Note that the coefficients in (\ref{eq:DerivExpansion}) are zero if
$\deg q' \ge \deg q$. Therefore $c_j (q,q') = 0$ in
(\ref{eq:InvariantSpace}) if $\deg q' \ge \deg q$. In particular,
since $g_\omega$ is the unique element of maximal degree in
$\cQ_\omega$, it we have that
\begin{equation}
  \label{eq:MutpliTabEigen}
  L \left( (\cdot)_j \, \ell_{\omega,g_\omega} \right) = (\cdot)_j \,
  \ell_{\omega,g_\omega}, \qquad \omega \in \Omega.
\end{equation}
This way, we have given a short and simple proof of the
following result from \cite{MoellerStetter95}, restricted to our
special case of principal multiplicities.

\begin{theorem}
  The \emph{eigenvalues} of the multiplication tables $M_j$ are the
  components of the zeros $(\xi_\omega)_j$, $\omega \in \Omega$, the associated
  eigenvectors the polynomials $\ell_{\omega,g_\omega}$ and the other
  fundamental polynomials form an invariant space.
\end{theorem}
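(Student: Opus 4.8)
The plan is to deduce the statement directly from the two identities (\ref{eq:InvariantSpace}) and (\ref{eq:MutpliTabEigen}) derived just above it, the only real work being to phrase them in terms of the matrices $M_j$. First I would record that the fundamental polynomials $\ell_{\omega,q}$, $q\in Q_\omega$, $\omega\in\Omega$, form a second basis of the normal set $\Pi/I_\Omega = \Span P$: they lie in this space, there are $N=\sum_\omega\dim\cQ_\omega=\dim(\Pi/I_\Omega)$ of them, and they are linearly independent because, by (\ref{eq:FundamentalDef}), they are dual to the linearly independent functionals $p\mapsto q(D)p(\xi_\omega)$. Since the multiplication table $M_j$ is the matrix, with respect to $P$, of the map $p\mapsto\nu\big((\cdot)_j\,p\big)$ on $\Pi/I_\Omega$ — and $\nu$ is exactly the operator $L$ in (\ref{eq:InvariantSpace}), the Hermite interpolant onto $\Span\{\ell_{\omega,q}\}$ that restricts to the identity there — $M_j$ is similar to the matrix $\widetilde M_j$ of the same map expressed in the basis $\{\ell_{\omega,q}\}$; hence the two share eigenvalues and, up to the change of basis, invariant subspaces.

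Next I would read off $\widetilde M_j$ from (\ref{eq:InvariantSpace}): for every $\omega$ and $q\in Q_\omega$,
$$
\nu\big((\cdot)_j\,\ell_{\omega,q}\big)=(\xi_\omega)_j\,\ell_{\omega,q}+\sum_{q'\in Q_\omega}c_j(q,q')\,\ell_{\omega,q'},
$$
together with the observation, made after (\ref{eq:DerivExpansion}), that $c_j(q,q')=0$ unless $\deg q'>\deg q$ — the coefficients $c_j$ being those of $\partial q'/\partial x_j$ in the graded basis $Q_\omega$, and differentiation strictly lowering degree. Thus $\widetilde M_j$ is block diagonal with one block per $\omega$, and ordering the $\ell_{\omega,q}$ in each block by increasing $\deg q$ turns the $\omega$-block into $(\xi_\omega)_j$ times the identity plus a strictly upper triangular nilpotent part. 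Its characteristic polynomial is therefore $\prod_{\omega\in\Omega}\big(t-(\xi_\omega)_j\big)^{\dim\cQ_\omega}$, so the eigenvalues of $M_j$ are exactly the components $(\xi_\omega)_j$; moreover each block $\Span\{\ell_{\omega,q}:q\in Q_\omega\}$ is $M_j$-invariant — it is the generalized eigenspace of $(\xi_\omega)_j$ whenever the $j$th coordinates of the $\xi_\omega$ are pairwise distinct — and the degree filtration refines it into a flag of invariant subspaces whose smallest member is the line through $\ell_{\omega,g_\omega}$.

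Finally, for the eigenvectors I would take $q=g_\omega$ in the identity above: since $g_\omega$ is the unique element of maximal degree in $Q_\omega$, no $q'$ satisfies $\deg q'>\deg g_\omega$, the sum vanishes, and — this is precisely (\ref{eq:MutpliTabEigen}) — $\nu\big((\cdot)_j\,\ell_{\omega,g_\omega}\big)=(\xi_\omega)_j\,\ell_{\omega,g_\omega}$, so $\ell_{\omega,g_\omega}$ is an honest eigenvector of $M_j$ for $(\xi_\omega)_j$, and for every $j$ simultaneously. I do not expect a deep obstacle, since the argument is essentially a transcription of identities already in hand; the points that need care are reading the symbol $(\cdot)_j$ on the right-hand sides of (\ref{eq:InvariantSpace}) and (\ref{eq:MutpliTabEigen}) as the scalar $(\xi_\omega)_j$ rather than as multiplication by a coordinate, checking that $\nu$ genuinely fixes the fundamental polynomials so that $\widetilde M_j$ really represents the multiplication map, and phrasing the eigenvector/eigenspace claim so that it stays correct when two frequencies share a $j$th coordinate, in which case the relevant generalized eigenspace is the sum of the corresponding blocks but each $\ell_{\omega,g_\omega}$ is still an eigenvector.
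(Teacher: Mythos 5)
Your proposal is correct and follows essentially the same route as the paper, which treats the theorem as an immediate restatement of (\ref{eq:InvariantSpace}) and (\ref{eq:MutpliTabEigen}); your explicit change to the fundamental-polynomial basis, the block structure by $\omega$, and reading off the eigenvalues from the triangular blocks are a faithful (and slightly more careful) transcription of the paper's computation, and you also correctly fix the degree inequality to $c_j(q,q')=0$ unless $\deg q' > \deg q$ — the paper's stated ``$\deg q' \ge \deg q$'' is a typo that would in fact contradict (\ref{eq:MutpliTabEigen}). One small internal slip: with that (correct) inequality and the ordering by increasing $\deg q$, the $\omega$-block is strictly \emph{lower} rather than upper triangular (mass is pushed toward higher-degree $\ell$'s), which is consistent with your own later statement that the smallest invariant subspace in the flag is the line through $\ell_{\omega,g_\omega}$; this does not affect the eigenvalue or eigenvector conclusions, since the latter follows directly from (\ref{eq:MutpliTabEigen}).
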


\noindent
In view of numerical linear algebra, the eigenvalue problems for
ideals with multiplicities become unpleasant as in general the
matrices become derogatory, except when $g_\omega$ is a power of
linear function, i.e., $g_\omega = ( v^T \cdot)^{\deg g_\omega}$ for
some $v \in \RR^s$, but the method by M\"oller and Tenberg
\cite{moeller01:_multiv} to determine the joint eigenvalues of
multiplication tables and their multiplicities also works in this situation.

There is another remedy described in
\cite[p.~48]{CohenCuypersSterk99}: building a matrix from traces of
certain multiplication tables, one can construct a basis for the
associated \emph{radical} ideal with simple zeros, thus avoiding the
hassle with the structure of multiplicities. In addition, this method
also gives the dimension of the multiplicity spaces which is
sufficient information to recover the polynomial coefficients.
Though this approach is surprisingly elementary, we
will not go into details here as it is not in the scope of the paper,
but refer once more to the recommendable collection
\cite{CohenCuypersSterk99}.

Moreover, the dimension of the respective invariant spaces is an upper
bound for $\deg f_\omega$ which can help to set up the parameters in
the interpolation problem in Section~\ref{sec:Prony}.

\section{Conclusion}
\label{sec:Conclusion}

The generalized version of Prony's problem with polynomial
coefficients is a straightforward extension of the standard problem
with constant coefficients. The main difference is that in
(\ref{eq:exppolyDef}) \emph{multiplicities} of common zeros in an
ideal play a role where the multiplicity spaces are related to the
shift invariant space generated by the coefficients via the operator
$L$ from (\ref{eq:LOperator}). This operator which relates the Taylor
expansion and interpolation at integer points in the Newton form, has
in turn a natural relationship with multivariate Stirling numbers of
the second kind. These properties can be used to extend the algorithms
from \cite{Sauer15S,Sauer2016A} almost without changes to the
generalized case, at least as far the construction of a good basis for
the Prony ideal is concerned.

The algorithms from \cite{Sauer15S,Sauer2016A}, numerical or symbolic,
can be reused, the only difference lies in multiplication tables with
multiplicities, but the tools from \cite{moeller01:_multiv} are also
available in this case and allow to detect zeros \emph{and} their
structure.

Implementations, numerical tests and comparison with the algorithms
from \cite{Mourrain16P} are straightforward lines of further work and
my be a worthwhile waste of time.


\begin{thebibliography}{10}
\providecommand{\url}[1]{\texttt{#1}}
\providecommand{\urlprefix}{URL }

\bibitem{AuzingerStetter88}
Auzinger, W., Stetter, H.J.: An elimination algorithm for the computation of
  all zeros of a system of multivariate polynomial equations. In: Numerical
  mathematics, Singapore 1988, Internat. Schriftenreihe Numer. Math., vol.~86,
  pp. 11--30. Birkh\"auser, Basel (1988)

\bibitem{deBoor05}
Boor, C.d.: Divided differences. Surveys in Approximation Theory  1,  46--69
  (2005), [Online article at] http://www.math.technion.ac.il/sat

\bibitem{boor05:_ideal}
Boor, C.d.: Ideal interpolation. In: Chui, C.K., Neamtu, M., Schumaker, L.L.
  (eds.) Approximation Theory XI, Gaitlinburg 2004. pp. 59--91. Nashboro Press
  (2005)

\bibitem{deBoorRon92a}
Boor, C.d., Ron, A.: The least solution for the polynomial interpolation
  problem. Math. Z.  210,  347--378 (1992)

\bibitem{CohenCuypersSterk99}
Cohen, A.M., Cuypers, H., Sterk, M. (eds.): Some Tapas of Computer Algebra,
  Algorithms and Computations in Mathematics, vol.~4. Springer (1999)

\bibitem{CoxLittleOShea92}
Cox, D., Little, J., O'Shea, D.: Ideals, Varieties and Algorithms.
  Undergraduate Texts in Mathematics, Springer--Verlag, 2. edn. (1996)

\bibitem{gould71:_noch_stirl_zahlen}
Gould, H.W.: Noch einmal die {S}tirlingschen {Z}ahlen. Jber. Deutsch.
  Math.-Verein  73,  149--152 (1971)

\bibitem{GrahamKnuthPatashnik98}
Graham, R.L., Knuth, D.E., Patashnik, O.: Concrete Mathematics.
  Addison--Wesley, 2nd edn. (1998)

\bibitem{groebner37:_ueber_macaul_system_bedeut_theor_differ_koeff}
Gr{\"o}bner, W.: {\"U}ber das {M}acaulaysche inverse {S}ystem und dessen
  {B}edeutung f{\"u}r die {T}heorie der linearen {D}ifferentialgleichungen mit
  konstanten {K}oeffizienten. Abh. Math. Sem. Hamburg  12,  127--132 (1937)

\bibitem{groebner39:_ueber_eigen_integ_differ_koeff}
Gr{\"o}bner, W.: {\"U}ber die algebraischen {E}igenschaften der {I}ntegrale von
  linearen {D}ifferentialgleichungen mit konstanten {K}oeffizienten. Monatsh.
  Math.  47,  247--284 (1939)

\bibitem{IsaacsonKeller66}
Isaacson, E., Keller, H.B.: Analysis of {N}umerical {M}ethods. John Wiley \&
  Sons (1966)

\bibitem{jordan65:_calcul}
Jordan, C.: Calculus of finite differences. Chelsea, 3rd edn. (1965)

\bibitem{MarinariMoellerMora96}
Marinari, M.G., M{\"o}ller, H.M., Mora, T.: On multiplicities in polynomial
  system solving. Trans. Amer. Math. Soc.  348(8),  3283--3321 (1996)

\bibitem{MoellerStetter95}
M{\"o}ller, H.M., Stetter, H.J.: Multivariate polynomial equations with
  multiple zeros solved by matrix eigenproblems. Numer. Math.  70,  311--329
  (1995)

\bibitem{moeller01:_multiv}
M{\"o}ller, H.M., Tenberg, R.: Multivariate polynomial system solving using
  intersections of eigenspaces. J. Symbolic Comput.  32,  513--531 (2001)

\bibitem{Mourrain16P}
Mourrain, B.: Polynomial-exponential decomposition from moments  (2016),
  arXiv:1609.05720v1

\bibitem{prony95:_essai}
Prony, C.: Essai exp{\'e}rimental et analytique sur les lois de la
  dilabilit{\'e} des fluides {\'e}lastiques, et sur celles de la force
  expansive de la vapeur de l'eau et de la vapeur de l'alkool, {\`a}
  diff{\'e}rentes temp{\'e}ratures. J. de l'{\'Ecole} polytechnique  2,  24--77
  (1795)

\bibitem{Sauer16:_kernels}
Sauer, T.: Kernels of discrete convolutions and subdivision operators. Acta
  Appl. Math.  145,  115--131 (2016), arXiv:1403.7724

\bibitem{Sauer15S}
Sauer, T.: Prony's method in several variables. Numer. Math.  (2017), to
  appear. arXiv:1602.02352

\bibitem{Sauer2016A}
Sauer, T.: Prony's method in several variables: symbolic solutions by universal
  interpolation. J. Symbolic Comput.  (2017), to appear, arXiv:1603.03944

\bibitem{schreiber15:_multiv_sterl}
Schreiber, A.: Multivariate {S}tirling polynomials of the first and second
  kind. Discrete Mathematics  338,  2462--2484 (2015)

\bibitem{Steffensen27}
Steffensen, I.F.: Interpolation. Chelsea Pub., New York (1927)

\bibitem{Stetter95}
Stetter, H.J.: Matrix eigenproblems at the heart of polynomial system solving.
  SIGSAM Bull.  30(4),  22--25 (1995)

\end{thebibliography}

\end{document}